\definecolor{gr}{rgb}   {0.,   0.69,   0.23 }
\definecolor{bl}{rgb}   {0.,   0.5,   1. }
\definecolor{mg}{rgb}   {0.85,  0.,    0.85}
\definecolor{yl}{rgb}   {0.8,  0.7,   0.}
\definecolor{or}{rgb}  {0.7,0.2,0.2}
\tikzset{
	ddot/.style={circle,fill=white,draw=black,inner sep=0pt,minimum size=0.8mm},
	>=stealth,
	}
\tikzset{
	ddot2/.style={circle,fill=black,draw=black,inner sep=0pt,minimum size=0.8mm},
	>=stealth,
	}
\newtheorem{theorem}{Theorem} [section]
\newtheorem{lemma}[theorem]{Lemma}
\newtheorem{proposition}[theorem]{Proposition}
\newtheorem{remark}[theorem]{Remark}
\DeclareMathOperator*{\intt}{\int}
\DeclareMathOperator*{\iintt}{\iint}
\DeclareMathOperator{\MAX}{MAX}
\newcommand{\noi}{\noindent}
\newcommand{\Z}{\mathbb{Z}}
\newcommand{\R}{\mathbb{R}}
\newcommand{\T}{\mathbb{T}}
\newcommand{\Pf}{\mathfrak{P}}
\newcommand{\Sf}{\mathfrak{S}}
\let\Re=\undefined\DeclareMathOperator*{\Re}{Re}
\let\Im=\undefined\DeclareMathOperator*{\Im}{Im}
\newcommand{\F}{\mathcal{F}}
\newcommand{\al}{\alpha}
\newcommand{\dl}{\delta}
\newcommand{\Dl}{\Delta}
\newcommand{\eps}{\varepsilon}
\newcommand{\G}{\Gamma}
\newcommand{\ft}{\widehat}
\newcommand{\wt}{\widetilde}
\newcommand{\cj}{\overline}
\newcommand{\dt}{\partial_t}
\newcommand{\ta}{\theta}
\renewcommand{\l}{\ell}
\newcommand{\les}{\lesssim}
\newcommand{\ges}{\gtrsim}
\newcommand{\jb}[1]
{\langle #1 \rangle}
\newcommand{\ind}{\mathbf 1}
\newcommand{\NN}{\mathcal{N}}
\newcommand{\N}{\mathbb{N}}
\newtheorem*{ackno}{Acknowledgements}
\numberwithin{equation}{section}
\numberwithin{theorem}{section}
\begin{document}
\baselineskip = 13.5pt

\title[Sharp LWP of the 2-$d$ quadratic NLS]
{Sharp local well-posedness of the two-dimensional periodic nonlinear Schr\"odinger equation with a quadratic nonlinearity $|u|^2$}

\author[R.~Liu and  T.~Oh]
{Ruoyuan Liu and  Tadahiro Oh}

\address{
Ruoyuan Liu,  School of Mathematics\\
The University of Edinburgh\\
and The Maxwell Institute for the Mathematical Sciences\\
James Clerk Maxwell Building\\
The King's Buildings\\
Peter Guthrie Tait Road\\
Edinburgh\\ 
EH9 3FD\\
 United Kingdom}

\email{ruoyuan.liu@ed.ac.uk}


\address{
Tadahiro Oh, School of Mathematics\\
The University of Edinburgh\\
and The Maxwell Institute for the Mathematical Sciences\\
James Clerk Maxwell Building\\
The King's Buildings\\
Peter Guthrie Tait Road\\
Edinburgh\\ 
EH9 3FD\\
 United Kingdom}

\email{hiro.oh@ed.ac.uk}

\subjclass[2020]{35Q55}
\keywords{nonlinear Schr\"odinger equation; well-posedness}
%


\begin{abstract}
We study the  nonlinear Schr\"odinger equation (NLS)
with the quadratic nonlinearity~$|u|^2$, posed on the two-dimensional torus $\T^2$.
While the relevant $L^3$-Strichartz estimate is known only with a derivative loss, 
we prove local well-posedness of the quadratic NLS in $L^2(\T^2)$, 
thus resolving an open problem of thirty years since Bourgain (1993).
In view of ill-posedness in negative Sobolev spaces, 
this result is sharp.
We establish a crucial bilinear estimate by separately
studying the non-resonant and nearly resonant cases.
As a corollary, we obtain 
a tri-linear version of the $L^3$-Strichartz estimate
without any derivative loss.

\end{abstract}

\maketitle

%

\section{Introduction}
\label{SEC:intro}

We consider the following Cauchy problem for the quadratic nonlinear Schr\"odinger equation (NLS) on the two-dimensional torus $\T^2 = (\R / \Z)^2$:
\begin{equation}
\begin{cases}
i \dt u + \Dl u = |u|^2 \\
u|_{t = 0} = u_0.
\end{cases}
\label{qNLS}
\end{equation}

\noi
Our main goal is to prove local well-posedness of \eqref{qNLS} in the low regularity setting.
Over the  last forty years, the Cauchy problems of NLS with various types of nonlinearities 
have been studied extensively.
The following NLS with a gauge-invariant nonlinearity:
\begin{equation}
i \dt u + \Dl u =  \pm |u|^{p-1} u 
\label{NLS}
\end{equation}

\noi
is the most well-known and well-studied example.
In the case of the $d$-dimensional torus $\T^d = (\R/\Z)^d$, Bourgain  \cite{B93} 
introduced the Fourier restriction norm method (via the $X^{s, b}$-spaces)
and proved local well-posedness of \eqref{NLS}
in  the low regularity setting.
In particular, when $d = 2$, he proved the following 
$L^4$-Strichartz estimate (with a derivative loss):
\begin{align}
\| e^{it \Dl}  f\|_{L^4([0, 1]; L^4(\T^2))} \les \|f\|_{H^s}
\label{Str1}
\end{align}

\noi
for any $s> 0$, which allowed him to prove
local well-posedness of the cubic NLS, \eqref{NLS} with $p = 3$, 
in $H^s(\T^2)$, for any $s> 0$. We point out that the estimate \eqref{Str1}
fails when $s = 0$ and 
the well-posedness issue of the cubic NLS
in the critical space $L^2(\T^2)$ remains
a challenging open problem; see \cite{B93, TTz, Kishi}.

Let us now turn to the quadratic NLS \eqref{qNLS}.
By interpolating \eqref{Str1} with the trivial $L^2$-bound, 
we obtain the $L^3$-Strichartz estimate with a derivative loss.
Then by proceeding as in~\cite{B93}, 
we immediately obtain local well-posedness
of \eqref{qNLS} in $H^s(\T^2)$ for any $s > 0$.
Due to the derivative loss in the Strichartz estimate, 
however, its well-posedness in $L^2(\T^2)$ has remained open 
for the last thirty years.
In this paper, we prove that \eqref{qNLS} is
indeed locally well-posed in $L^2(\T^2)$.

\begin{theorem}\label{THM:LWP}
The quadratic NLS \eqref{qNLS} is locally well-posed in $L^2(\T^2)$. More precisely, given any $u_0 \in L^2(\T^2)$, there exists $T = T(\| u_0 \|_{L^2}) > 0$ and a unique solution $u
\in C([-T, T]; L^2(\T^2))$ to \eqref{qNLS} with $u|_{t=0} = u_0$.
\end{theorem}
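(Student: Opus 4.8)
The plan is to run a fixed-point argument in a Fourier restriction space adapted to $L^2(\T^2)$. I would work with the $X^{s,b}$-spaces $X^{0,b}$ for $b$ slightly larger than $\tfrac12$, or (more likely) a slightly finer function space such as $X^{0,\frac12,1}$ or a Besov-type refinement $X^{0,b}$ intersected with an auxiliary norm that compensates for the derivative loss in the classical $L^3$-Strichartz estimate. After the standard reduction via the Duhamel formula, the contraction estimate reduces to a single \emph{bilinear estimate} of the schematic form
\begin{equation}
\bigl\| N(u_1,\overline{u_2}) \bigr\|_{X^{0,-b+}} \lesssim \|u_1\|_{X^{0,b}} \|u_2\|_{X^{0,b}},
\label{eq:bilin-target}
\end{equation}
where $N(u_1,\overline{u_2})$ is the bilinear form obtained from the nonlinearity $|u|^2 = u\overline{u}$ (so that one factor is complex-conjugated, which crucially changes the modulation/resonance structure relative to the cubic $u^2\overline u$ interaction). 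By duality and Plancherel, \eqref{eq:bilin-target} becomes a trilinear sum over frequencies $n = n_1 - n_2 + n_3$ (or $n_1 + n_2 = n_3$, depending on conventions) with a denominator built from the three modulation weights $\langle \tau_j + |n_j|^2\rangle$ and the output weight $\langle \tau + |n|^2\rangle$; the key algebraic identity is the resonance relation expressing $ (\tau + |n|^2) - (\tau_1 + |n_1|^2) + (\tau_2+|n_2|^2) - \dots$ purely in terms of the spatial frequencies.

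The core of the argument is the dichotomy flagged in the abstract: splitting into the \textbf{non-resonant} and the \textbf{nearly resonant} regimes. In the non-resonant case, the resonance function $\Phi(n_1,n_2,n_3)$ is large, so one of the modulation weights dominates; I would then place the factor carrying the large modulation in $X^{0,b}$, pay the modulation gain to beat the derivative loss, and close the sum using the (derivative-lossy) $L^4$-Strichartz estimate \eqref{Str1} — or rather its $L^3$ interpolated version — applied to the remaining factors, since a small modulation gain $\langle\Phi\rangle^{-\theta}$ with $\theta>0$ is enough to absorb the $N^{0+}$ loss. In the nearly resonant case, where $|\Phi|$ is small, the crucial point is that for the $|u|^2$ interaction the near-resonance condition forces a rigid geometric constraint on $(n_1,n_2,n_3)$ (the relevant lattice points are confined to a thin neighborhood of a conic/linear variety), and this geometric smallness — an $\ell^2$-based counting estimate on the number of admissible frequency configurations, à la divisor bounds / Bourgain's $\Lambda(p)$-type arguments — compensates exactly for the absence of a modulation gain. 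Establishing this nearly-resonant bilinear estimate with a clean $L^2$ endpoint (no $\varepsilon$ loss) is where the real work lies, and is the main obstacle; it is also what forces the use of an $X^{s,b}$-variant with an $\ell^1$ (rather than $\ell^2$) structure in the $b$-variable, so that the time-interval localization and the summation over dyadic modulation blocks cost only logarithms rather than powers.

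Once \eqref{eq:bilin-target} is in hand, the remainder is routine: a standard contraction-mapping argument on a ball in $X^{0,b}([-T,T])$ with $T = T(\|u_0\|_{L^2})$ gives existence and uniqueness in that space, and persistence of regularity plus the embedding $X^{0,b} \hookrightarrow C([-T,T];L^2)$ for $b>\tfrac12$ upgrades this to the $C([-T,T];L^2(\T^2))$ statement of Theorem~\ref{THM:LWP}; the usual time-divisibility of the $X^{s,b}$-norm supplies the small factor $T^{0+}$ or $T^{\theta}$ needed to make the map a contraction for $T$ small. The corollary on the trilinear $L^3$-Strichartz estimate without derivative loss then follows by specializing the bilinear estimate (or its trilinear dual form) to free solutions $u_j = e^{it\Delta}f_j$, for which the $X^{0,b}$-norm is controlled by $\|f_j\|_{L^2}$.
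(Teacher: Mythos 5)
Your overall skeleton coincides with the paper's: Duhamel plus contraction in $X^{0,\frac12+}_T$, reduction by duality to a trilinear form governed by the resonance relation $(\tau+|n|^2)-(\tau_1+|n_1|^2)+(\tau_2+|n_2|^2)=-2n\cdot n_2$, and a dichotomy between non-resonant interactions (where the modulation gain absorbs the $\eps$-derivative loss in the $L^4$-Strichartz estimate) and nearly resonant ones. The non-resonant half of your plan would go through essentially as you describe. However, there is a genuine gap: the nearly resonant case, which you yourself identify as ``where the real work lies,'' is not actually argued. Saying that the lattice points are confined to a thin neighborhood of a variety and that a divisor-bound/$\Lambda(p)$-type counting ``compensates exactly'' is not a proof, and it is precisely at the $L^2$ endpoint that such generic counting heuristics are known to fail (this is why the problem was open since 1993 and why the cubic problem in $L^2(\T^2)$ is still open). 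The paper's resolution is specific to the $u\bar v$ structure: near resonance means $|n\cdot n_2|\ll |n|^{\eps}|n_2|^{\eps}$, i.e.\ the output frequency $n$ and the conjugated input frequency $n_2$ are nearly perpendicular. One then splits according to the relative sizes of $|n|$ and $|n_2|$. When they are comparable (and in the intermediate regime $|n|^{1/2}\lesssim|n_2|\lesssim|n|^2$), one decomposes $n$ and $n_2$ into angular sectors of width comparable to the resonance threshold, uses that each sector of $n$ interacts with only $O(1)$ sectors of $n_2$, and verifies by a geometric argument that for fixed $n_1$ the admissible $n$ lie in a rectangle containing $O(1)$ lattice points; Cauchy--Schwarz in the sector index then closes the estimate with no loss. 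When $|n|^2\ll|n_2|$ one needs a further splitting in the largest modulation $L_{\max}$: for $L_{\max}\gtrsim N_2$ a bilinear Strichartz estimate (Lemma \ref{LEM:bilin}) supplies the gain, while for $L_{\max}\ll N_2$ one uses the algebraic relation to confine the component of $n_1$ parallel to $n$ to a short interval and invokes a lattice-point counting lemma in thin annular sectors (Lemma \ref{LEM:count}). None of this case analysis, nor any workable substitute for it, appears in your proposal, so the central estimate remains unproved.

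A smaller correction: your suggestion that one is forced into an $\ell^1$-in-modulation Besov refinement $X^{0,\frac12,1}$ is not borne out. The paper closes the argument in the plain spaces $X^{0,\frac12+\dl_2}$ with output in $X^{0,-\frac12+\dl_1}$, $\dl_2<\dl_1<2\dl_2$; the dyadic sums over modulation blocks converge because every case of the bilinear estimate produces small negative powers $L_j^{0-}$ (and $N^{0-}$ where needed), so no endpoint $b=\frac12$ structure is required. Your final paragraph (contraction, $T^{0+}$ gain from the inhomogeneous linear estimate, embedding into $C([-T,T];L^2)$, and the trilinear corollary by specializing to free solutions) matches the paper and is fine once the bilinear estimate is available.
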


Our proof is based on the Fourier restriction norm method and, as such, 
 the uniqueness holds only in (the local-in-time version of) the relevant $X^{s,b}$-space.

A few remarks are in order.
In \cite{Kishi3}, Kishimoto proved ill-posedness of \eqref{qNLS} in $H^s(\T^2)$ for $s < 0$
and thus
Theorem \ref{THM:LWP} is sharp.  See Remark \ref{REM:1} below.
Theorem \ref{THM:LWP} is also sharp in the sense that 
local-in-time solutions constructed in Theorem \ref{THM:LWP}
can not be in general extended globally in time.
Indeed,  the second author \cite{O1} proved a finite-time blowup result
for \eqref{qNLS}. See also~\cite{FO}.
This argument can be easily extended to $L^2(\T^2)$, yielding the following proposition.

\begin{proposition}
Let $s \geq 0$ and $u_0 \in H^s(\T^2)$. If the initial data $u_0$ satisfies
\[ \Im \int_{\T^2} u_0 dx < 0 \quad \text{or} \quad \Re \int_{\T^2} u_0 dx \neq 0,  \]
then the forward maximal existence time $T^*$ of the solution $u$ to \eqref{qNLS} 
with $u|_{t = 0} = u_0$ is finite
and we have  $\liminf_{t \nearrow T^*} \| u (t) \|_{H^s} = \infty$.
\end{proposition}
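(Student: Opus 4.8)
The plan is to monitor the spatial average $a(t) := \int_{\T^2} u(t,x)\,dx = \widehat{u}(t,0)$, i.e.\ the zeroth Fourier mode of the solution. First I would pass to the Duhamel formulation of \eqref{qNLS} and use that the linear propagator $e^{it\Dl}$ acts trivially on the zeroth mode, that $\int_{\T^2}\Dl u\,dx = 0$, and that $\widehat{|u|^2}(t,0) = \int_{\T^2}|u(t)|^2\,dx = \|u(t)\|_{L^2}^2$ (since $|\T^2| = 1$), to obtain, on the interval of existence,
\[
a(t) = a(0) - i\int_0^t \|u(\tau)\|_{L^2}^2\,d\tau .
\]
Hence $\Re a(t) \equiv \Re a(0) =: \alpha_0$ is conserved, while $\beta(t) := \Im a(t)$ is $C^1$ (because $\tau \mapsto \|u(\tau)\|_{L^2}^2$ is continuous) and nonincreasing, with $\beta'(t) = -\|u(t)\|_{L^2}^2$. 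By the Cauchy--Schwarz inequality and $|\T^2| = 1$ one has $\|u(t)\|_{L^2}^2 \ge |a(t)|^2 = \alpha_0^2 + \beta(t)^2$, so that $\beta$ satisfies the Riccati-type differential inequality
\[
\beta'(t) \le -\alpha_0^2 - \beta(t)^2 .
\]

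Next I would run the standard comparison with the ODE $y' = -y^2$, splitting into the two hypotheses of the statement. If $\Im a(0) < 0$, then, $\beta$ being nonincreasing, $\beta(t) \le \beta(0) < 0$ throughout, so $v(t) := -1/\beta(t) > 0$ satisfies $v'(t) = \beta'(t)/\beta(t)^2 \le -1$ and therefore $0 < v(t) \le v(0) - t = 1/|\beta(0)| - t$; thus $\beta(t) = -1/v(t) \to -\infty$ as $t \nearrow 1/|\Im a(0)|$, which is impossible for a solution existing on $[0, 1/|\Im a(0)|]$, since then $\|u(t)\|_{L^2} \ge |\beta(t)|$ would already blow up at an interior time. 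If instead $\Re a(0) = \alpha_0 \neq 0$, then $\beta'(t) \le -\alpha_0^2$ forces $\beta$ to reach the value $-1$ by some finite time $t_1 \le \max(0,\, (\Im a(0)+1)/\alpha_0^2)$ provided the solution persists that long, and the same Riccati comparison started at $t_1$ gives blowup of $|\beta(t)|$, hence of $\|u(t)\|_{L^2}$, by time $t_1 + 1$. In both cases the maximal $L^2$-existence time is finite; since an $H^s$-solution ($s \ge 0$) is in particular an $L^2$-solution and, by uniqueness in the relevant $X^{s,b}$-class, coincides with it on their common interval, we conclude $T^* < \infty$.

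Finally, to upgrade ``$\|u(t)\|_{L^2}$ is unbounded near $T^*$'' to the claimed $\liminf_{t\nearrow T^*}\|u(t)\|_{H^s} = \infty$, I would invoke that the local existence time furnished by Theorem \ref{THM:LWP} (and by its standard persistence-of-regularity analogue for $s > 0$) depends only on the $H^s$-norm of the data: if there were a sequence $t_n \nearrow T^* < \infty$ along which $\|u(t_n)\|_{H^s}$ stayed bounded, one could restart \eqref{qNLS} from $u(t_n)$ for a fixed positive time and, for $n$ large, extend the solution past $T^*$, contradicting maximality. Since $\|u(t)\|_{H^s} \ge \|u(t)\|_{L^2}$ for $s \ge 0$, the $L^2$-blowup above suffices. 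The argument is thus essentially soft; the only points I expect to require genuine care are the justification of the zeroth-mode identity for the (a priori merely $X^{s,b}$) solutions produced by Theorem \ref{THM:LWP}, checked on the Duhamel formula using the triviality of $e^{it\Dl}$ on the zeroth mode, and the bookkeeping that converts the Riccati blowup into actual finiteness of $T^*$ rather than a mere a priori bound. This is just the extension to $H^s$, $s \ge 0$, of the blowup mechanism of \cite{O1}.
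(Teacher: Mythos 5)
Your proposal is correct and is essentially the paper's own argument: the paper proves this proposition by citing the zeroth-Fourier-mode ODE/Riccati blowup mechanism of \cite{O1} (see also \cite{FO}) and noting it extends to $L^2(\T^2)$, which is exactly the computation $a'(t) = -i\|u(t)\|_{L^2}^2$ plus Cauchy--Schwarz and comparison that you carry out, combined with the standard restart argument using that the local existence time depends only on the $H^s$-norm. The points you flag (justifying the zeroth-mode identity for $X^{s,b}$ solutions via Duhamel) are indeed the only technical care needed.
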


See \cite{FO} on the lifespan of solutions to \eqref{qNLS}.
We point out that, in the one-dimensional case, Fujiwara and Georgiev \cite{FG}
recently proved the criterion for global existence.
Consider the quadratic NLS \eqref{qNLS} on the one-dimensional torus $\T$.
Then, there exists a global $L^2$-solution $u$ to \eqref{qNLS} on $\T$
if and only if $\Re u_0 = 0$ and $\Im u_0 = \mu$
for some $\mu \geq 0$.  In particular, any global $L^2$-solution
is necessarily constant in space. It would be of interest to investigate this issue
in the two-dimensional case.

As mentioned above, our proof of Theorem \ref{THM:LWP} is based
on the Fourier restriction norm method.
More precisely, Theorem \ref{THM:LWP} follows
from a standard contraction argument, once we proof the following bilinear estimate.

\begin{proposition}\label{PROP:bilin}
Let $0 < T \leq 1$. Let $\dl_1 > \dl_2 > 0$ be sufficiently small.
Then,  for $s\ge 0 $, we have
\begin{align} \| u \cj{v} \|_{X_T^{s,-\frac 12 + \dl_1}} \les \| u \|_{X_T^{s, \frac 12 + \dl_2}} \| v \|_{X_T^{s, \frac 12 + \dl_2}}. 
\label{bilin1}
\end{align}
\end{proposition}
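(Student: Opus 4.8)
The plan is to reduce the bilinear estimate \eqref{bilin1} to a weighted $\ell^2$-convolution estimate on the Fourier side and then split the analysis according to the size of the resonance function. First I would pass from the local-in-time spaces $X_T^{s,b}$ to the global spaces $X^{s,b}$ via the standard extension argument, and then dualize: writing $u, v, w$ on the Fourier–spacetime side with weights $\jb{n}^s \jb{\tau + |n|^2}^{b}$, the estimate \eqref{bilin1} is equivalent to
\begin{align}
\sum_{n = n_1 - n_2} \iintt_{\tau = \tau_1 - \tau_2}
\frac{\jb{n}^s \, \wh f_1(n_1,\tau_1) \, \wh f_2(n_2,\tau_2) \, \wh g(n,\tau)}
{\jb{n_1}^s \jb{n_2}^s \,
\jb{\tau + |n|^2}^{\frac12 - \dl_1} \,
\jb{\tau_1 + |n_1|^2}^{\frac12 + \dl_2} \,
\jb{\tau_2 - |n_2|^2}^{\frac12 + \dl_2}}
\les \|f_1\|_{\ell^2 L^2} \|f_2\|_{\ell^2 L^2} \|g\|_{\ell^2 L^2},
\label{dualform}
\end{align}
where the modulation variables satisfy $\tau_1 + |n_1|^2$, $\tau_2 - |n_2|^2$, $\tau + |n|^2$ and their sum equals the resonance function
\[
\Phi(n_1,n_2) = |n|^2 - |n_1|^2 + |n_2|^2 \big|_{n = n_1 - n_2} = 2 n_2 \cdot (n_2 - n_1) = 2 n_2 \cdot (-n).
\]
Because the nonlinearity is $|u|^2 = u \bar u$ rather than $u^2$, this $\Phi$ is \emph{linear} in each of $n_1, n_2$ once the other is fixed, which is the source of the near-resonant pathology and why the classical $X^{s,b}$ argument of Bourgain only gives $s > 0$.

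The main split is: \textbf{(i) the non-resonant regime}, where at least one of the three modulations dominates, say $\max(|\tau_1 + |n_1|^2|, |\tau_2 - |n_2|^2|, |\tau + |n|^2|) \gtrsim |\Phi|$ with $|\Phi|$ not too small; and \textbf{(ii) the nearly resonant regime} $|\Phi| \ll 1$ or, more precisely, $|\Phi| \lesssim$ (some power of the lowest frequency). In regime (i), one performs the standard argument: use one factor of the large modulation weight to run Cauchy–Schwarz in the remaining variables, and then invoke an $L^4$-type Strichartz bound for the two factors of lower modulation (in the form of the counting estimate for the number of lattice points on the relevant hypersurface). Since one has $\frac12 + \dl_2$ on two inputs and $-\frac12 + \dl_1$ on the output, there is always enough room: this is essentially the $s > 0$ argument of Bourgain but now, because $|\Phi|$ is bounded below, the tiny loss in \eqref{Str1} is absorbed by the gain from the large modulation. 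For regime (ii), near-resonance forces $n_2 \cdot n$ to be small; I would decompose into the case $|n| \lesssim 1$ (very low output frequency — then $n_1 \approx n_2$, both factors are at comparable high frequency, and the $\jb{n}^s$ in the numerator is harmless, so a direct Cauchy–Schwarz in the output variables together with an almost-orthogonality / $X^{0,b}$-product bound suffices) and the case $|n| \gtrsim 1$, where near-resonance confines $n_2$ (hence $n_1$) to a thin slab $\{|n_2 \cdot n| \lesssim |\Phi|\}$ of the frequency lattice; counting lattice points in this slab, and summing the geometric series in the dyadic size of $|\Phi|$, closes the estimate because the slab is genuinely lower-dimensional.

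The hard part — and the technical heart of the paper — will be regime (ii) with $|n| \gtrsim 1$: the near-resonant interactions. Here none of the three modulation weights is large, so there is no modulation gain to exploit, and one is forced to extract smallness purely from the geometry of the resonance set $\{ 2 n_2 \cdot n \approx 0\}$. The delicate points are: (a) getting the lattice-point count in the thin slab sharp enough, uniformly in the direction of $n$ and in the dyadic parameters, so that the sum over dyadic blocks of $|n_1|, |n_2|, |n|$ and over the dyadic size of $|\Phi|$ converges without any $\eps$-loss at $s = 0$; and (b) handling the borderline Sobolev exponents — the fact that $b = \frac12$ is itself critical for $X^{s,b} \embeds C_t L^2_x$ means the small parameters $\dl_1 > \dl_2 > 0$ must be spent carefully, using $\dl_1 - \dl_2 > 0$ as the genuine gain in the output weight while the $\dl_2$-losses on the inputs are compensated by time-localization on $[-T,T]$. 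Once these two regimes are combined and the dyadic sums are summed, \eqref{bilin1} follows; I expect the tri-linear $L^3$-Strichartz corollary to drop out by specializing $f_1, f_2, g$ to free solutions and reading \eqref{dualform} backwards.
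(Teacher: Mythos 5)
Your setup (reduction to $s=0$, dualization, the resonance identity $\Phi = -2\, n\cdot n_2$, and the non-resonant regime where a large modulation pays for the $\eps$-derivative loss in the $L^4$-Strichartz estimate) matches the paper's argument. The genuine gap is in the nearly resonant regime, which is precisely the heart of the problem. Your plan there --- confine $n_2$ to the thin slab $\{|n_2\cdot n|\les |\Phi|\}$, count lattice points in the slab, and sum a geometric series in the dyadic size of $|\Phi|$ --- does not close at $s=0$. The slab is thin only in the direction of $n$; in the perpendicular direction $n_2$ still ranges over $\sim |n_2|$ lattice points (e.g.\ for $|n|\sim|n_2|\sim N$ the slab has thickness $O(N^{-1+})$ but length $\sim N$, so it contains $\sim N$ points, and for $|n|\les 1\ll |n_2|$ it contains even more). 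A Cauchy--Schwarz argument based on this count therefore loses a factor $\sim N^{1/2}$, which is exactly the derivative loss of the $L^4$/$L^3$-Strichartz estimate that the proposition is designed to avoid; no summation over dyadic values of $|\Phi|$ can recover it, since the output modulation weight $\jb{\tau+|n|^2}^{-\frac12+\dl_1}$ gives no frequency gain. You also assert that in the near-resonant regime ``none of the three modulation weights is large''; this is false --- smallness of $\Phi$ only forces the modulations to nearly cancel, not to be individually small --- and the large-modulation near-resonant interactions need their own treatment.

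What is actually needed (and what the paper does) is finer structure than a slab count. For comparable frequencies $|n|\sim|n_2|$ one decomposes $n$ and $n_2$ into angular sectors of width $\sim\theta$ and exploits almost orthogonality: near-perpendicularity of $n$ and $n_2$ means each output sector interacts with only $O(1)$ input sectors, and for fixed sectors the admissible $n$ lie in a rectangle with $O(1)$ lattice points, after which one sums by Cauchy--Schwarz in the sector index (this is the Tao/CKSTT angular argument). For unbalanced frequencies, say $|n|^2\ll|n_2|$, one needs a further dichotomy in the largest modulation: when $L_{\max}\ges N_2$ the bilinear Strichartz estimate (Lemma \ref{LEM:bilin}) already gives an acceptable bound with a power of $L_0$ strictly below $\tfrac12$; when $L_{\max}\ll N_2$ one uses that $|\,|n_1|-|n_2|\,|\ll 1$ to place $n_1$ on an annulus of width $O(1)$, that the component of $n_1$ along $n$ is confined to an interval of length $\sim\min(N_0, L_{\max}/N_0)$, and that $\angle(n_1,n)$ is bounded away from $0$ and $\pi$, so that the counting lemma for annulus--strip--sector intersections (Lemma \ref{LEM:count}) yields a count $\les L_{\max}^{1/2}$, summable against the modulation weights. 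None of these mechanisms appears in your outline, so the proposal as written would fail exactly where the $s>0$ barrier sits. (Two smaller points: the bilinear estimate \eqref{bilin1} carries no factor of $T$, so the $\dl_2$-losses are not ``compensated by time-localization'' inside the estimate --- the power of $T$ is harvested later via Lemma \ref{LEM:Xlin}\,(ii) from the gap between $-\tfrac12+\dl_1$ and the inhomogeneous exponent; and the reduction also requires the relation $\dl_2<\dl_1<2\dl_2$ rather than merely $\dl_1>\dl_2$.)
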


Here, $X^{s, b}_T$ denotes the local-in-time version of the $X^{s, b}$-space.
See Section \ref{SEC:2} for the definition.
As mentioned above, the $L^3$-Strichartz comes with a derivative loss
and hence can not be used directly to prove \eqref{bilin1}.
We instead separate the proof into two cases:
(i)~non-resonant interaction and (ii) nearly resonant interaction.
In the non-resonant case, thanks to the gain of derivative via
multilinear dispersion, 
we can make up for the loss of the derivative in the $L^3$-Strichartz estimate.
In the nearly resonant case, we notice that the angle
between the second incoming wave and the outgoing wave
is almost perpendicular.
This angular restriction allows us to prove the estimate without any derivative loss.
See also \cite{Tao4, CKSTT08}.
See Section~\ref{SEC:bilin} for details.

%
%

We conclude this introduction by several remarks.

\begin{remark}\label{REM:1} \rm
(i) Unlike \cite{B93}, 
our argument does not rely on intricate number theoretic properties
and thus Theorem \ref{THM:LWP} also holds on a general torus
$\T^2_{\pmb{\al}} = (\R/\al_1 \Z) \times (\R/\al_2 \Z) $
for any ratio $\pmb{\al} = (\al_1, \al_2)$ with  $\al_1, \al_2 > 0$. 
   This is essentially due to the fact that
   the key bilinear Strichartz estimate
   and the counting lemma (see Lemmas \ref{LEM:bilin} and \ref{LEM:count} below) hold on a general flat torus (see Lemmas 2.5 and 2.9 in~\cite{Kish13}) and the $L^4$-Strichartz estimate in Lemma \ref{LEM:L4} also holds on  a general torus with an $\eps$ derivative loss (see Theorem 2.4 in \cite{BD15}).
We also mention
\cite{B07, CW, GOW, BD15, KV, DGG}
for further discussions on the Strichartz estimates
and well-posedness of NLS on irrational tori.

\smallskip

\noi
(ii) The bilinear estimate \eqref{bilin1} also holds
for $u v$ and $\cj{u} \cj{v}$ in place of $u \cj{v}$ on the left-hand side.
Indeed,  for $\cj{u} \cj{v}$, Gr\"unrock \cite{Gr}
proved the corresponding bilinear estimate for $s > - \frac 12$.
As for $uv$, a slight modification of the proof of Proposition \ref{PROP:bilin}
yields the corresponding bilinear estimate for $s \ge 0$.
See Remark \ref{REM:bilin}.
These bilinear estimates yield local well-posedness
in the corresponding ranges.

The quadratic NLS on $\T^2$ is critical
in $H^{-1}(\T^2)$ with respect to the scaling symmetry.
Nonetheless, Kishimoto \cite{Kishi3} proved ill-posedness of the 
quadratic NLS on $\T^2$ in $H^s(\T^2)$
for (a) $s < 0 $ with the nonlinearity $\NN(u) =  |u|^2$
and  
(b) $s \le -1$ with $\NN(u) = u^2$ or $\cj{u}^2$.
As mentioned above, the well-posedness theory of 
the quadratic NLS~\eqref{qNLS} with the nonlinearity $|u|^2$
is now complete.
On the other hand, as for the nonlinearity $u^2$ or $\cj u^2$, 
there is still a gap between the well-posedness and ill-posedness regularities.

\smallskip

\noi
(iii) 
It is conjectured in \cite{B93} that, on $\T^2$, the $L^p$-Strichartz estimate holds
without any derivative loss as long as $  p < 4$.
At this point, the $L^p$-Strichartz estimate on $\T^2$
for $2< p < 4$ is known to hold with a slight loss of derivative
and  this conjecture remains open.
By considering a multilinear version of the Strichartz estimate, however, 
we obtain 
the following tri-linear version of the $L^3$-Strichartz estimate without any derivative loss:
\begin{align}
\bigg|\int_0^1 \int_{\T^2} \prod_{j = 1}^3 \big(e^{it \Dl} \phi_j\big)^*
dx dt \bigg|
\les \prod_{j = 1}^3 \|\phi_j \|_{L^2},
\label{tri1}
\end{align}

\noi
where $u^*$ denotes $ u$ or $\cj u$.
The estimate \eqref{tri1} follows easily from 
the bilinear estimate \eqref{bilin1} (also for $uv$ and $\cj u \cj v$)
and the duality.
We point out that it is crucial that 
we have a product structure on the left-hand side of \eqref{tri1}.

%
%
%
%
%
%
%

\smallskip

\noi
(iv) Lastly, let us consider the following quadratic NLS on $\T^2$
with a gauge-invariant nonlinearity:
\begin{equation}
i \dt u + \Dl u = \pm |u| u .
\label{NLS9}
\end{equation}

\noi
This equation is of particular interest in view of the mass and energy conservations.
In particular, local well-posedness in $L^2(\T^2)$
together with the mass conservation would imply
global well-posedness in the same space.

When $s> 0$, local well-posedness of \eqref{NLS9} in $H^s(\T^2)$
easily 
follows from the $L^4$-Strichartz estimate~\eqref{Str1}
(more precisely, Lemma \ref{LEM:L4} below).
When $s = 0$, however, the non-algebraic nature of the nonlinearity
makes the local well-posedness problem of \eqref{NLS9} in $L^2(\T^2)$
rather challenging.
For example, the tri-linear estimate \eqref{tri1}
is not useful to study \eqref{NLS9}
and,  moreover, multilinear analysis via the Fourier restriction
norm method (such as that presented in this paper)
is not applicable due to the presence of 
$|u|$.
While there are well-posedness results
\cite{OOP, Lee}
on the periodic  NLS with non-algebraic gauge-invariant nonlinearities, 
one would need a much more intricate argument
to prove local well-posedness of \eqref{NLS9} in $L^2(\T^2)$.

\end{remark}

\section{Notations and preliminary lemmas}
\label{SEC:2}


\subsection{Notations}
For a spacetime function $u$ defined on $\R \times \T^2$, we write $\mathcal{F}_{t,x} u$ or $\ft u$ to denote the spacetime Fourier transform of $u$.
In the following, we drop the inessential factor of $2\pi$.
We  also set $\jb{\,\cdot\,} = (1 + |\cdot|^2)^\frac{1}{2}$.

Given a dyadic number $N  \ge 1$, we let $P_N$ be the spatial  frequency projector onto the
 frequencies 
\[ \Pf_N := \big\{ (\tau, n)\in \R\times \Z^2 : \tfrac{N}{2} < |n| \leq N \big\}.\] 
Also, given a dyadic number $L \geq 1$, we define $Q_L$ to be the 
modulation projector onto the space-time frequencies 
\[\Sf_L :=\big \{ (\tau, n)\in \R\times\Z^2: \tfrac{L}{2} < \big| \tau + |n|^2 \big| \leq L \big\}.\]
 For brevity, we also set  $P_{N,L} = P_N Q_L$.

In what follows, $N$ and $L$ (possibly with subscripts) always denote dyadic numbers $\geq 1$. We write $\overline{N}_{ij\cdots} := \max(N_i, N_j, \dots )$ and $\underline{N}_{ij\cdots} := \min ( N_i, N_j, \dots )$. We also write $N_{\text{max}} := \overline{N}_{012}$, $N_{\text{min}} := \underline{N}_{012}$, $L_{\text{max}} := \overline{L}_{012}$, $L_{\text{min}} := \underline{L}_{012}$, and $L_{\text{med}} := L_0 L_1 L_2 / L_{\text{min}} L_{\text{max}}$.

We use $A \les B$ to denote $A \leq CB$ for some constant $C > 0$, and we write $A \sim B$ to denote $A \les B$ and $B \les A$. We also write $A \ll B$ if $A \leq cB$ for some sufficiently small $c > 0$.
We may use subscripts to denote dependence on external parameters; for example,
 $A\les_{p, q} B$ means $A\le C(p, q) B$,
 where the constant $C(p, q)$ depends on parameters $p$ and $q$.
 In addition, we use $a+$ (and $a-$, respectively) to denote $a+\eps$ (and $a-\eps$, respectively)
for arbitrarily small $\eps > 0$.

\subsection{Preliminary lemmas}
In this subsection, we recall some useful lemmas. 
%
We first recall the following bilinear  Strichartz estimate. For the proof, see Lemma 2.5 in~\cite{Kish13}.
\begin{lemma}\label{LEM:bilin}
Let $N_j, L_j \geq 1$,  $j = 0,1,2$,  be dyadic numbers. Suppose that $u_1, u_2 \in L^2(\R \times \T^2)$ satisfy
\[ \textup{supp } \ft{u_1} \subset \Pf_{N_1} \cap \Sf_{L_1}
\quad\text{and}\quad \textup{supp } \ft{u_2} \subset \Pf_{N_2} \cap \Sf_{L_2}.  \]
Then, we have
\[ \|  \F_{t, x}(u_1 \cj{u_2}) \|_{L_\tau^2 \ell_n^2(\Pf_{N_0})} 
\les \underline{L}_{12}^{\frac 12} \bigg( \frac{\overline{L}_{12}}{N_0} + 1 \bigg)^{\frac 12} N_{\textup{min}}^{\frac 12} \| u_1 \|_{L_{t,x}^2} \| u_2 \|_{L_{t,x}^2}. \]
\end{lemma}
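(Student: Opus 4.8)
The plan is to compute the space-time Fourier transform of $u_1\cj{u_2}$ and reduce the estimate, via Cauchy--Schwarz, to a lattice-point count. Writing $f_j := |\F_{t,x} u_j| \ge 0$, for $(\tau, n) \in \R\times\Z^2$ with $\tfrac{N_0}{2} < |n| \le N_0$ one has
\[
\big| \F_{t,x}(u_1\cj{u_2})(\tau, n) \big| \les \sum_{n_1} \int_{\R} f_1(\tau_1, n_1)\, f_2(\tau_1 - \tau,\, n_1 - n)\, d\tau_1 ,
\]
where the sum runs over $n_1 \in \Z^2$ with $\tfrac{N_1}{2} < |n_1| \le N_1$ and $\tfrac{N_2}{2} < |n_1 - n| \le N_2$. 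First I would apply Cauchy--Schwarz in this $(\tau_1, n_1)$-integration, which bounds the square of the right-hand side by $\mathcal A(\tau, n)$ times $\sum_{n_1} \int_\R f_1(\tau_1, n_1)^2\, f_2(\tau_1 - \tau, n_1 - n)^2\, d\tau_1$, where $\mathcal A(\tau, n)$ is the measure (Lebesgue in $\tau_1$, counting in $n_1$) of the set of $(\tau_1, n_1)$, in the above range, on which the integrand does not vanish. Since summing the second factor over $\tfrac{N_0}{2} < |n| \le N_0$ and integrating in $\tau$ yields at most $\|u_1\|_{L^2_{t,x}}^2 \|u_2\|_{L^2_{t,x}}^2$ by Tonelli, it remains to prove
\[
\sup_{\tau,\, n}\, \mathcal A(\tau, n) \les \underline L_{12}\, \Big( \frac{\overline L_{12}}{N_0} + 1 \Big)\, N_{\textup{min}} ,
\]
the supremum being over $(\tau, n) \in \R\times\Z^2$ with $\tfrac{N_0}{2} < |n| \le N_0$.

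To estimate $\mathcal A$, fix such a pair $(\tau, n)$. For each admissible $n_1$, the relevant set of $\tau_1$ lies in $\{ |\tau_1 + |n_1|^2| \le L_1 \} \cap \{ |\tau_1 - \tau + |n_1 - n|^2| \le L_2 \}$, whose measure is $\les \underline L_{12}$ and which is empty unless
\[
\big|\, \tau + 2\, n_1 \cdot n - |n|^2 \,\big| \les \overline L_{12} ,
\]
since $|n_1|^2 - |n_1 - n|^2 = 2\, n_1 \cdot n - |n|^2$. Hence $\mathcal A(\tau, n) \les \underline L_{12}$ times the number of $n_1 \in \Z^2$ with $\tfrac{N_1}{2} < |n_1| \le N_1$, $\tfrac{N_2}{2} < |n_1 - n| \le N_2$, and $n_1 \cdot n$ in a fixed interval of length $\sim \overline L_{12}$. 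Geometrically this counts lattice points in the intersection of the annulus of radius $\sim N_1$ about the origin, the annulus of radius $\sim N_2$ about $n$, and a slab orthogonal to $n$ of thickness $\sim \overline L_{12}/N_0$. In the non-resonant configurations --- namely $N_0 \ges \min(N_1, N_2)$, or $N_1 \not\sim N_2$ --- one of the two annulus conditions already traps $n_1$ in a set of diameter $\sim N_{\textup{min}}$ (a ball, or a thin annulus); intersecting this with the slab leaves a region of dimensions $\les N_{\textup{min}}$ by $\sim \overline L_{12}/N_0$, containing $\les N_{\textup{min}}(\overline L_{12}/N_0 + 1)$ lattice points, which is the desired bound.

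The remaining case $N_1 \sim N_2 \gg N_0$ --- a high-high interaction producing a comparatively low output frequency --- is the main obstacle. Here the two annuli essentially coincide, and the counting above gives only $\max(N_1, N_2)(\overline L_{12}/N_0 + 1)$, losing the claimed factor $N_{\textup{min}}/\max(N_1, N_2)$; the $L^4$-Strichartz estimate is of no use either, owing to its $\eps$-derivative loss. To recover the gain I would exploit the Galilean symmetry of $e^{it\Dl}$, under which both sides of the estimate are invariant: a Galilean shift translates every spatial frequency by a common vector while leaving the output frequency $n_1 - n_2$ and each modulation $\tau_j + |n_j|^2$ unchanged. Decomposing the frequency supports of $u_1$ and $u_2$ into cubes of side $\sim N_0$ and applying a suitable Galilean shift to each piece reduces matters to the case in which all spatial frequencies are $\les N_0$ --- already covered by the elementary count above --- and one then attempts to reassemble the pieces by an almost-orthogonality argument for their space-time Fourier supports. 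Making this reassembly efficient, so as to obtain $N_{\textup{min}}^{1/2}$ rather than $\min(N_1, N_2)^{1/2}$ in the resonant regime, is the crux; this is where the finer argument of \cite{Kish13} enters.
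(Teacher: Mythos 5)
Your reduction (Cauchy--Schwarz in $(\tau_1,n_1)$, the $\tau_1$-fibre of measure $\les \underline{L}_{12}$, the constraint $|\tau+2n_1\cdot n-|n|^2|\les\overline{L}_{12}$) is correct, and your rectangle count does settle every configuration except $N_0\ll N_1\sim N_2$. But that high-high-to-low case is the entire nontrivial content of the lemma, and you do not prove it: you sketch a Galilean-shift/cube decomposition and then explicitly defer the ``reassembly'' to \cite{Kish13} --- which is exactly the reference the paper itself cites in lieu of a proof (Lemma 2.5 there), so as a self-contained argument your write-up stops precisely where the work begins. Moreover, the gap is not a routine omission. First, no argument that passes through $\sup_{\tau,n}\mathcal{A}(\tau,n)$ can close it: take $n=(N_0,0)$, $L_1=L_2=1$, and $\tau$ chosen so that the admissible value of the first component $n_1^{(1)}$ is $0$; then the fibre contains all $n_1=(0,k)$ with $|k|\sim N_1$ lying in both thick annuli, each carrying a $\tau_1$-interval of length $\sim 1$, so $\mathcal{A}(\tau,n)\ges N_1$, far above the required $\underline{L}_{12}\,(\overline{L}_{12}/N_0+1)\,N_0$. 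The lemma survives only because such fat fibres are rare in $(\tau,n)$, and that is exactly the information your pointwise Cauchy--Schwarz discards. Second, the almost orthogonality you invoke after decomposing the supports into cubes of side $\sim N_0$ fails in the naive ``finitely overlapping supports'' sense: for a fixed output frequency $n\in\Pf_{N_0}$, the piece coming from input cubes centred at $c,c'$ has its $\tau$-support inside $\{|\tau+n\cdot(c+c')|\les C(N_0^2+\overline{L}_{12})\}$, so all $\sim N_1/N_0$ cube pairs whose centres differ by vectors essentially orthogonal to $n$ produce overlapping space-time supports, and summing them by Cauchy--Schwarz reinstates the very loss you are trying to remove.

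So the status is: your proposal proves the lemma when $N_0\ges\min(N_1,N_2)$ or $N_1\not\sim N_2$, correctly identifies the resonant high-high-to-low regime as the obstruction, but leaves that regime to \cite{Kish13}, and the repair it gestures at would itself need a genuinely finer mechanism --- one has to retain the localization coming from both modulation constraints before counting (so that, with $\tau$ and $\tau_1$ both fixed, $n_1$ is confined to the intersection of two annuli of thicknesses $\sim L_1/N_1$ and $\sim L_2/N_2$ whose centres are only $|n|\sim N_0$ apart), or some equivalent orthogonality in $(\tau,n)$, with a lattice-point count in the spirit of Lemma \ref{LEM:count}; nothing of this kind is supplied in the proposal. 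Since the paper treats the lemma as a quoted result, your attempt should either do the same honestly or carry out this main case in full; at present it does neither.
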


We also recall the following counting lemma. For the proof, see Lemma 2.9\,(ii) in \cite{Kish13}, which was stated in general dimension $d \geq 2$ but we only need the $d = 2$ case.

\begin{lemma}\label{LEM:count}
Let $N \gg 1$, $N^{-1} \leq \mu, \nu \ll N$, $M \geq 0$, and set
\[ D := \{ \xi = (\xi_1, \xi_2) \in \R^2: N \leq |\xi| \leq N + \mu, M \leq \xi_1 \leq M + \nu \}. \]
Let $\mathcal{R}$ be an arbitrary rotation operator on $\R^2$. 
Moreover, with $e_1 = ( 1,0) \in \R^2$, set 
\[ K := \Big\{ \xi \in \R^2: \frac{\al}{2} \leq \angle(\xi, e_1) \leq 2\al \Big\},\]

\noi
where $\angle(\xi, e_1) $ denotes the angle between $\xi$ and $e_1$.
Suppose that 
\[ 
\bigg( \frac{\mu + \min\{\nu, 1\}}{N} \bigg)^{\frac 12 } \ll \al \leq \frac{\pi}{4}. \]
Then, we have
\[ |\Z^2 \cap \mathcal{R}(D \cap K)| \les \max\{ \nu, 1 \} \big(\al^{-1}(\mu + \min\{ \nu, 1 \}) + 1\big). \]
\end{lemma}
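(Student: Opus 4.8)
The plan is to turn this into an elementary lattice-point count inside a thin box, after using the rotation $\mathcal R$ to put the geometry into a convenient normal form. Since $\mathcal R$ is a rotation, $|\Z^2\cap\mathcal R(D\cap K)| = |\mathcal R^{-1}\Z^2\cap(D\cap K)|$, and $\mathcal R^{-1}\Z^2$ is again a unimodular lattice (a rotated copy of $\Z^2$); I will count points of this lattice inside the fixed region $D\cap K$, so that the strip $\{M\le\xi_1\le M+\nu\}$ and the sector $K$ keep their stated axis-aligned form. Write $\xi=(\xi_1,\xi_2)$. We may assume $D\cap K\neq\emptyset$, since otherwise there is nothing to prove.

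First I would record two geometric facts. \emph{(a) Angular control.} If $\xi\in K$ with $\xi_2\ge 0$, then $\xi_2\ge\xi_1\tan\tfrac\alpha2$, hence $\xi_1\le\xi_2\cot\tfrac\alpha2$, so on $D\cap K$ one gets $N^2\le\xi_1^2+\xi_2^2\le\xi_2^2\csc^2\tfrac\alpha2$, i.e. $|\xi_2|\ge N\sin\tfrac\alpha2\ges\alpha N$; here $\alpha\le\tfrac\pi4$ is used. Likewise $|\xi_2|\les\alpha N$ on $D\cap K$, using the upper angular bound together with $|\xi|\les N$; thus $|\xi_2|\sim\alpha N$ throughout $D\cap K$. \emph{(b) Width in $\xi_2$.} If $I\subset[M,M+\nu]$ is a sub-interval of length $w$ whose corresponding slice meets $D\cap K$, then $\xi_1\les N$ on that slice, so $\xi_2^2=|\xi|^2-\xi_1^2$ ranges over an interval of length $\les N\mu+Nw\les N(\mu+w)$; combined with $|\xi_2|\ges\alpha N$ this forces $\xi_2$ to lie in a union of two intervals, one near $+\alpha N$ and one near $-\alpha N$, each of length $\les\alpha^{-1}(\mu+w)$.

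Then I would partition $\{M\le\xi_1\le M+\nu\}$ into $\les\nu/\min\{\nu,1\}+1\les\max\{\nu,1\}$ sub-strips of width $\min\{\nu,1\}$. By (a)--(b), on each sub-strip $D\cap K$ is contained in a union of two axis-parallel boxes of dimensions $\les\min\{\nu,1\}\times\alpha^{-1}(\mu+\min\{\nu,1\})$. The number of points of the unimodular lattice $\mathcal R^{-1}\Z^2$ in such a box is at most an absolute constant times $1+(\text{perimeter})+(\text{area})$---this follows by packing disjoint translates of a fundamental domain around the lattice points and is manifestly rotation invariant---hence is $\les 1+\min\{\nu,1\}+\alpha^{-1}(\mu+\min\{\nu,1\})+\min\{\nu,1\}\cdot\alpha^{-1}(\mu+\min\{\nu,1\})\les 1+\alpha^{-1}(\mu+\min\{\nu,1\})$, where we used $\min\{\nu,1\}\le 1$. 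Summing over the $\les\max\{\nu,1\}$ sub-strips yields $\max\{\nu,1\}\bigl(\alpha^{-1}(\mu+\min\{\nu,1\})+1\bigr)$, as claimed.

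The point that requires the most care---and the main obstacle---is keeping the rotation honest: the lattice-point-in-a-box bound must have a constant independent of the box's orientation, which is precisely why I pass to $\mathcal R^{-1}\Z^2$ and invoke the rotation-invariant area-plus-perimeter count rather than slicing in the ambient $\xi_1,\xi_2$ coordinates. The other delicate spot is obtaining clean absolute constants in $|\xi_2|\sim\alpha N$ and in the $\xi_2^2$-width estimate, which is where $\alpha\le\tfrac\pi4$ (and, in the general-dimension version of the lemma quoted from \cite{Kish13}, the lower bound $\alpha\gg(\tfrac{\mu+\min\{\nu,1\}}{N})^{1/2}$) enters. Everything else---the two sectors $\pm\xi_2\ge 0$, the possibly empty slices, and the collapsing of $(1+\min\{\nu,1\})$ factors using $\min\{\nu,1\}\le 1$---is routine bookkeeping.
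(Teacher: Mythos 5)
Your proof is correct. Note that the paper does not supply its own argument for this lemma: it is quoted verbatim from Kishimoto (Lemma 2.9\,(ii) in \cite{Kish13}), so the relevant comparison is with that source rather than with anything in the present text. Your write-up is a valid self-contained substitute for the $d=2$ case, and the route is natural: you transfer the rotation onto the lattice, $|\Z^2\cap\mathcal R(D\cap K)|=|\mathcal R^{-1}\Z^2\cap(D\cap K)|$, keep the region axis-aligned, and then count. The two key estimates check out: on $D\cap K$ one has $|\xi_2|\ge N\sin\tfrac\al2\ges \al N$ (using $2\al\le\tfrac\pi2$, so $\xi_1\ge0$ and $\xi_2=|\xi|\sin\angle(\xi,e_1)$), and on a slice $\xi_1\in I$, $|I|=w$, the quantity $\xi_2^2=|\xi|^2-\xi_1^2$ varies over an interval of length $\les N(\mu+w)$, whence each sign of $\xi_2$ is confined to an interval of length $\les \al^{-1}(\mu+w)$. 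Splitting $[M,M+\nu]$ into $\les\max\{\nu,1\}$ strips of width $\min\{\nu,1\}$ and using the orientation-independent bound $\#(\Lambda\cap B)\les 1+\mathrm{perimeter}(B)+\mathrm{area}(B)$ (valid here because $\mathcal R^{-1}\Z^2$ has minimal gap $1$, so half-unit discs about its points are disjoint; this would not hold for a general unimodular lattice, but you only need rotated $\Z^2$) gives exactly $\max\{\nu,1\}\big(\al^{-1}(\mu+\min\{\nu,1\})+1\big)$. Two minor observations: your argument never actually invokes the hypothesis $\al\gg\big(\frac{\mu+\min\{\nu,1\}}{N}\big)^{1/2}$, which is harmless (an unused assumption only weakens the statement) and indicates that in two dimensions the bound holds without it, whereas Kishimoto's formulation covers general $d\ge2$; and the upper bound $|\xi_2|\les\al N$ in your step (a) is not needed, only the lower bound enters the width estimate.
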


\subsection{Fourier restriction norm method}
In studying well-posedness of NLS on $\T^d$, Bourgain \cite{B93}
introduced the Fourier restriction norm method (see also \cite{KM93}), 
utilizing the following $X^{s, b}$-spaces.
 Given $s, b \in \R$, 
 we define the space $X^{s,b}(\R \times \T^2)$ 
 to be  the completion of functions that are $C^\infty$ in space and Schwartz in time with respect to the following norm:
\begin{align}
 \| u \|_{X^{s,b}(\R \times \T^2)} := \big\| \jb{n}^s \jb{\tau + |n|^2}^b \ft u(\tau, n) \big\|_{ L_\tau^2 \ell_n^2(\R \times \Z^2) }. 
\label{Xsb}
\end{align}

\noi
For $T > 0$, we define the space $X_T^{s,b}$ to be the restriction of 
the $X^{s, b}$-space onto the time interval 
$[-T, T]$ via the norm:
\begin{equation}
\| u \|_{X_T^{s,b}} := \inf \big\{ \| v \|_{X^{s,b}}: v|_{[-T, T]} = u \big\}.
\label{XT}
\end{equation}

\noi
Note that $X^{s, b}_T$ is complete.
Given any $s \in \R$ and $b > \frac 12$, we have $X_T^{s,b} \subset C([-T, T]; H^s(\T^2))$.

We now recall the following linear estimates. See \cite{B93, GTV, Tao}.

\begin{lemma}\label{LEM:Xlin}
Let $s \in \R$ and $0 < T \leq 1$.

\smallskip \noi
\textup{(i)} For any $b \in \R$, we have
\[ \| e^{it\Dl} \phi \|_{X_T^{s,b}} \les_b \| \phi \|_{H^s}. \]

\smallskip \noi
\textup{(ii)} Let $-\frac 12 < b' \leq 0 \leq b \leq b'+1$. Then, we have
\[ \bigg\| \int_0^t e^{i(t-t')\Dl} F(t') dt' \bigg\|_{X_T^{s,b}} \les_{b,b'} T^{1-b+b'} \| F \|_{X_T^{s,b'}}. \]
%
\end{lemma}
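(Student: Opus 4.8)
The plan is to prove both bounds by the standard arguments underlying the Fourier restriction norm method (see \cite{B93, GTV, Tao}). Fix once and for all a cutoff $\eta \in C_c^\infty(\R)$ with $\eta \equiv 1$ on $[-1,1]$ and $\supp \eta \subset [-2,2]$, and write $\eta_T(t) = \eta(t/T)$; since $0 < T \le 1$, both $\eta$ and $\eta_T$ equal $1$ on $[-T,T]$. For part (i), the function $\eta(t)\, e^{it\Dl}\phi$ is an extension of $(e^{it\Dl}\phi)|_{[-T,T]}$, so it suffices to bound its $X^{s,b}$ norm. A direct computation gives $\mathcal{F}_{t,x}\big(\eta(t)\, e^{it\Dl}\phi\big)(\tau, n) = \ft\eta\big(\tau + |n|^2\big)\ft\phi(n)$, and hence, after the substitution $\tau \mapsto \tau - |n|^2$, the $X^{s,b}$ norm factorizes as
\begin{equation*}
\big\| \eta(t)\, e^{it\Dl}\phi \big\|_{X^{s,b}} = \big\| \jb{n}^s \ft\phi(n) \big\|_{\ell^2_n} \cdot \big\| \jb{\tau}^b \ft\eta(\tau) \big\|_{L^2_\tau} \les_b \| \phi \|_{H^s},
\end{equation*}
the last factor being finite because $\ft\eta$ is Schwartz and $b$ is fixed. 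Taking the infimum over extensions yields (i).

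For part (ii), the crucial reduction is to conjugate by the linear group. Writing $v(t) = e^{-it\Dl} u(t)$, one has $\| u \|_{X^{s,b}} = \big\| \jb{n}^s \jb{\tau}^b \mathcal{F}_{t,x} v(\tau, n) \big\|_{L^2_\tau \ell^2_n}$, and the Duhamel term $u(t) = \int_0^t e^{i(t-t')\Dl} G(t')\, dt'$ corresponds to $v(t) = \int_0^t g(t')\, dt'$ with $g(t') = e^{-it'\Dl}G(t')$. Since the spatial weight $\jb{n}^s$ and the $\ell^2_n$-summation play no role, the estimate (ii) reduces --- after multiplying by $\eta_T$, replacing $F$ by an extension $G$ with $\| G \|_{X^{s,b'}} \les \| F \|_{X_T^{s,b'}}$, and restricting back to $[-T,T]$ --- to the following scalar-in-time inequality, uniform in the suppressed spatial frequency: for $-\tfrac 12 < b' \le 0 \le b \le b'+1$,
\begin{equation}
\Big\| \eta_T(t) \int_0^t g(t')\, dt' \Big\|_{H^b_t(\R)} \les_{b,b'} T^{1-b+b'} \| g \|_{H^{b'}_t(\R)}.
\label{EQ:Duhamel1d}
\end{equation}

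To prove \eqref{EQ:Duhamel1d} I would split $g = g_{\mathrm{lo}} + g_{\mathrm{hi}}$ according to $|\tau| \les T^{-1}$ and $|\tau| \gg T^{-1}$. On low frequencies, Taylor-expanding yields $\int_0^t g_{\mathrm{lo}}(t')\, dt' = \int \ft{g_{\mathrm{lo}}}(\tau)\, \frac{e^{it\tau}-1}{i\tau}\, d\tau = \sum_{k\ge1}\frac{(it)^k}{k!}\, c_k$ with $c_k = \int \ft{g_{\mathrm{lo}}}(\tau)\,\tau^{k-1}\, d\tau$; Cauchy--Schwarz gives $|c_k| \les_k T^{b'-k+\frac 12}\| g \|_{H^{b'}}$, while a scaling computation gives $\| \eta_T(t)\, t^k \|_{H^b_t} \les_k T^{k+\frac 12-b}$, so that each term contributes $\les_k T^{1-b+b'}\| g \|_{H^{b'}}$ and the series over $k$ converges --- crucially because $\eta$ is compactly supported, so the $k$-dependence of the constants is only exponential and is beaten by $1/k!$. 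On high frequencies, since $\ft{g_{\mathrm{hi}}}$ is supported away from $\tau = 0$, the primitive $h(t) = \int_{-\infty}^t g_{\mathrm{hi}}(t')\, dt'$ is well defined with $\ft h(\tau) = \ft{g_{\mathrm{hi}}}(\tau)/(i\tau)$ and decays at $\pm\infty$, so that $\int_0^t g_{\mathrm{hi}} = h(t) - h(0)$; the constraint $b \le b'+1$ makes $\jb{\tau}^{\sigma-b'}/|\tau|$ controllable on $|\tau| \gg T^{-1}$ for $\sigma \in \{0,1\}$, so that $\| \eta_T h \|_{H^b}$ interpolates to $\les T^{1-b+b'}\| g \|_{H^{b'}}$, while the boundary term obeys $|h(0)| \les T^{b'+\frac 12}\| g \|_{H^{b'}}$ --- finite precisely because $b' > -\tfrac 12$ --- which together with $\| \eta_T \|_{H^b} \les T^{\frac 12-b}$ gives the same bound.

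The main obstacle is \eqref{EQ:Duhamel1d}, where the hypotheses enter sharply: $b' > -\tfrac 12$ is exactly what makes the high-frequency boundary integral $\int_{|\tau| \gg T^{-1}} |\tau|^{-2b'-2}\, d\tau$ converge, and $b \le b'+1$ is exactly what prevents the primitive $h$ from costing a derivative (the conditions $0 \le b$ and $b' \le 0$ enter only through elementary monotonicities under scaling). Part (i), the conjugation reduction, and the bookkeeping with the restriction norm \eqref{XT} are routine. In particular, choosing $b' = -\tfrac 12 + \dl_1$ and $b = \tfrac 12 + \dl_2$, so that $1-b+b' = \dl_1 - \dl_2 > 0$, supplies the time-gain needed in the contraction argument together with Proposition \ref{PROP:bilin} and part (i).
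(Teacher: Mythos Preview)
Your proof is correct and is precisely the standard argument that the paper defers to by citing \cite{B93, GTV, Tao}; the paper gives no proof of its own for this lemma. The reduction to a scalar-in-time $H^b$--$H^{b'}$ inequality via conjugation by $e^{-it\Dl}$, followed by the split at $|\tau|\sim T^{-1}$ with a Taylor expansion on low frequencies and an antiderivative-plus-boundary decomposition on high frequencies, matches the treatment in \cite{GTV} and \cite{Tao}.
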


Lastly,  we record the following $L^4$-Strichartz estimate on $\T^2$. 

\begin{lemma}\label{LEM:L4}
Let $s > 0$ and $b > \frac 12$. Then, we have
\begin{align} \| u \|_{L_{t,x}^4([0,1] \times \T^2)} \les_{s, b} \| u \|_{X^{s, b}}. 
\label{L42}
\end{align}
\end{lemma}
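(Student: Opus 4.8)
The plan is to derive \eqref{L42} from Bourgain's linear $L^4$-Strichartz estimate \eqref{Str1} by the standard transference principle; no input beyond \eqref{Str1} is required, and since $X^{s,b}(\R\times\T^2)$ is by definition the completion of functions that are smooth in $x$ and Schwartz in $t$, it suffices to treat such $u$. Writing $F = \F_{t,x} u$ and substituting $\sigma = \tau + |n|^2$ in the Fourier inversion formula, I would represent $u$ as a continuous superposition of modulated free evolutions,
\[
u(t,x) = \int_\R e^{it\sigma}\,\big(e^{it\Dl} g_\sigma\big)(x)\,d\sigma, \qquad \widehat{g_\sigma}(n) := F\big(\sigma - |n|^2,\, n\big),
\]
with $\widehat{\,\cdot\,}$ the spatial Fourier coefficient of the $x$-function $g_\sigma$; the same change of variables applied to the definition \eqref{Xsb} gives the Plancherel-type identity
\[
\| u \|_{X^{s,b}(\R\times\T^2)}^2 = \int_\R \jb{\sigma}^{2b}\,\| g_\sigma \|_{H^s(\T^2)}^2\, d\sigma.
\]

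The proof then proceeds in three routine steps. First, apply Minkowski's integral inequality in $\sigma$ and use $|e^{it\sigma}| = 1$ to bound
\[
\| u \|_{L^4_{t,x}([0,1]\times\T^2)} \le \int_\R \big\| e^{it\Dl} g_\sigma \big\|_{L^4_{t,x}([0,1]\times\T^2)}\, d\sigma.
\]
Second, invoke \eqref{Str1} with the given $s > 0$ to get $\| e^{it\Dl} g_\sigma \|_{L^4_{t,x}([0,1]\times\T^2)} \les_s \| g_\sigma \|_{H^s}$ for each $\sigma$. Third, insert $1 = \jb{\sigma}^{-b}\jb{\sigma}^{b}$ and apply the Cauchy--Schwarz inequality in $\sigma$, obtaining
\[
\| u \|_{L^4_{t,x}([0,1]\times\T^2)} \les_s \Big( \int_\R \jb{\sigma}^{-2b}\,d\sigma \Big)^{\frac12}\Big( \int_\R \jb{\sigma}^{2b}\,\| g_\sigma \|_{H^s}^2\, d\sigma \Big)^{\frac12} \les_{s,b} \| u \|_{X^{s,b}},
\]
where the first factor is finite precisely because $b > \tfrac12$; the full estimate \eqref{L42} then follows by density.

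There is no genuine obstacle in this argument: the entire difficulty is already contained in the linear estimate \eqref{Str1}, whose proof (due to Bourgain \cite{B93}) rests on counting lattice points on circles and which fails at the endpoint $s = 0$ — this is exactly why the hypothesis $s > 0$ is needed and cannot be removed. For the same reason I would \emph{not} attempt to prove \eqref{L42} from the number-theory-free bilinear Strichartz estimate of Lemma \ref{LEM:bilin}: decomposing $\| u \|_{L^4_{t,x}}^2 = \| u\cj u \|_{L^2_{t,x}}$ dyadically and applying Lemma \ref{LEM:bilin} to each product is lossy in the frequency-balanced regime $N_1 \sim N_2$ and recovers only $s \geq \tfrac14$, so the route through \eqref{Str1} is the natural one.
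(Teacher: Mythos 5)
Your argument is correct and is essentially the route the paper itself takes: the paper cites Bourgain for \eqref{L42} and explicitly notes that such $X^{s,b}$ bounds follow from the linear estimate \eqref{Str1} via the standard transference principle (Lemma 2.9 in \cite{Tao}), which is exactly the foliation-into-modulated-free-evolutions plus Cauchy--Schwarz in $\sigma$ argument you wrote, with $b>\tfrac12$ used precisely where you use it.
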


For the proof, see \cite{B93, B95}.  We point out that the $L^4$-Strichartz estimate 
\eqref{L42} does not hold when $s=0$.  See \cite{TTz}.
As mentioned in the introduction, the $L^4$-Strichartz estimate \eqref{Str1}
with an $\eps$-loss of derivative
holds on a general flat torus
$\T^2_{\pmb{\al}} = (\R/\al_1 \Z) \times (\R/\al_2 \Z) $
for any ratio $\pmb{\al} = (\al_1, \al_2)$ with  $\al_1, \al_2 > 0$;
see  Theorem~2.4 in \cite{BD15}.
Then, the estimate \eqref{L42}
on a general flat torus $\T^2_{\pmb{\al}}$ follows from a standard transference principle (see Lemma 2.9 in \cite{Tao}).

\section{Proof of Proposition \ref{PROP:bilin}}
\label{SEC:bilin}

In this section, we  present the proof of  Proposition~\ref{PROP:bilin}.
By the triangle inequality $\jb{n}^s \les \jb{n_1}^s \jb{n_2}^s$
under $n = n_1 + n_2$ for $s \ge 0$, 
 our goal is to prove  the following bilinear estimate (with $s = 0$):
\begin{align}
 \| u \cj{v} \|_{X_T^{0,-\frac 12 + \dl_1}} \les \| u \|_{X_T^{0, \frac 12 + \dl_2}} 
 \| v \|_{X_T^{0, \frac 12 + \dl_2}}, 
\label{bilin1a}
\end{align}

\noi
where $0 < T \leq 1$ and $\dl_1, \dl_2 > 0$ are sufficiently small, 
satisfying  $\dl_2 < \dl_1 < 2\dl_2$. 
In view of the definitions \eqref{Xsb} and \eqref{XT} of the $X^{s, b}$-spaces, 
the bilinear estimate \eqref{bilin1a} follows
once we prove
\begin{equation}
\begin{split}
\bigg\| & \sum_{\substack{n_1\in \Z^2\\ n = n_1 - n_2}} 
\intt_{\tau = \tau_1 - \tau_2} \frac{\ft u(\tau_1, n_1) \cj{\ft v(\tau_2, n_2)}}{\jb{\tau_1 + |n_1|^2}^{\frac 12 + \dl_2} \jb{\tau_2 + |n_2|^2}^{\frac 12 + \dl_2} \jb{\tau + |n|^2}^{\frac 12 - \dl_1}} d\tau_1 \bigg\|_{L_\tau^2 \ell_n^2} \\
& \les \| u \|_{L_{t,x}^2} \| v \|_{L_{t,x}^2}.
\end{split}
\label{bilin2}
\end{equation}

\noi
By duality, the estimate \eqref{bilin2} 
follows once we prove the following estimate:\
\begin{equation}
\begin{split}
\bigg| &\sum_{\substack{n, n_1\in \Z^2\\ n = n_1 - n_2}} 
\iintt_{\tau = \tau_1 - \tau_2} \frac{\ft u (\tau_1, n_1) \cj{\ft v (\tau_2, n_2) }\cj{\ft w (\tau, n)}}{\jb{\tau_1 + |n_1|^2}^{\frac 12 + \dl_2} \jb{\tau_2 + |n_2|^2}^{\frac 12 + \dl_2} \jb{\tau + |n|^2}^{\frac 12 - \dl_1}}
d\tau_1 d\tau \bigg| \\
& \les \| u \|_{L_{t,x}^2} \| v \|_{L_{t,x}^2} \| w \|_{L_{t,x}^2}.
\end{split}
\label{bilin3}
\end{equation}

\noi
We first note that if  $n = 0$, $n_1 = 0$, or $n_2 = 0$, 
then the estimate \eqref{bilin3} follows easily from  the Cauchy-Schwarz inequality, 
provided that $\dl_1 > 0$ is sufficiently small. 
 Hence,  for the remaining of this section, we assume that $n \neq 0$, $n_1 \neq 0$, and $n_2 \neq 0$.

Before proceeding further, we recall
the following key algebraic relation:
\begin{equation}
(\tau + |n|^2) - (\tau_1 + |n_1|^2) + (\tau_2 + |n_2|^2) = -2 n \cdot n_2, 
\label{phase}
\end{equation}

\noi
where  $n = n_1 - n_2$ and $\tau = \tau_1 - \tau_2$.
The main difficulty in proving \eqref{bilin3} (namely, the bilinear estimate \eqref{bilin1} with $s = 0$)
comes from the failure of 
the $L^4$-Strichartz estimate (without a derivative loss).
In order to overcome this difficulty, we separately estimate the contributions coming
from 
(i) non-resonant case: $|n \cdot n_2| \ges |n|^\eps |n_2|^\eps$
and (ii) nearly resonant case: $|n \cdot n_2| \ll |n|^\eps |n_2|^\eps$
for some small $ \eps > 0$.
In the non-resonant case, 
we can use the multilinear dispersion
to make up for the derivative loss in the $L^4$-Strichartz estimate (Lemma~\ref{LEM:L4}).
In the nearly resonant case, by noting that the frequencies $n$ and $n_2$
are almost perpendicular, 
we make use of this angular restriction to prove
a multilinear estimate without a derivative loss
in the spirit of \cite{Tao4, CKSTT08}.

\subsection{Non-resonant interaction}
In this subsection, we consider the non-resonant case:
\begin{align}
|n \cdot n_2| \ges |n|^\eps |n_2|^\eps
\label{nr1}
\end{align}

\noi
 for some small $\eps > 0$ sufficiently small.
From \eqref{phase} and \eqref{nr1}, we have
\begin{align}
\begin{split}
\MAX :\! & =  \max \big( \jb{\tau + |n|^2}, \jb{\tau_1 + |n_1|^2}, \jb{\tau_2 + |n_2|^2} \big)\\
& \ges \jb{n \cdot n_2}
\ges \jb{n}^\eps \jb{n_2}^\eps.
\end{split}
\label{nr2}
\end{align}
We then consider the following three cases.
In the following, we set   $0 < \dl_1 < \frac 12 $.

\medskip \noi
$\bullet$ \textbf{Case 1:} $\text{MAX} = \jb{\tau + |n|^2}$. 

In this case, we directly prove \eqref{bilin2}.
Then, with  $\eps_1 = \frac \eps 2(\frac 12 - \dl_1)> 0$, 
it follows from \eqref{nr2} that 
\begin{equation}
\jb{n_1}^{\eps_1} \jb{n_2}^{\eps_1} = \jb{n+n_2}^{\eps_1} \jb{n_2}^{\eps_1} \les \jb{n}^{2\eps_1} \jb{n_2}^{2\eps_1} \les \jb{\tau + |n|^2}^{\frac 12 - \dl_1}.
\label{tn0}
\end{equation}

\noi
Let $\ft U(\tau_1, n_1) = \jb{n_1}^{-\eps_1} \jb{\tau_1 + |n_1|^2}^{-\frac 12 - \dl_2} \ft u(\tau_1, n_1)$
and
$\ft V(\tau_2, n_2) = \jb{n_2}^{-\eps_1} \jb{\tau_2 + |n_2|^2}^{-\frac 12 - \dl_2} \ft v(\tau_2, n_2)$.
Then, by \eqref{tn0}, Plancherel's theorem, H\"older's inequality, and the $L^4$-Strichartz estimate (Lemma \ref{LEM:L4}), we have
\begin{align*}
\text{LHS of \eqref{bilin2}} 
&\les \| U \cj V\|_{L^2_{t, x}}
\leq \| U \|_{L^4_{t, x}}\|  V\|_{L^4_{t, x}}
\les \|U\|_{X^{\eps_1, \frac 12 + \dl_2}}
\|V\|_{X^{\eps_1, \frac 12 + \dl_2}}\\
&= \| u \|_{L_{t,x}^2} \| v \|_{L_{t,x}^2}
\end{align*}

\noi
for any $\dl_2 > 0$.

\medskip \noi
$\bullet$ \textbf{Case 2:} $\text{MAX} = \jb{\tau_1 + |n_1|^2}$. 

In this case, we prove \eqref{bilin3}. 
With  $\eps_2 = \eps(\frac 12 - \dl_1)> 0$, 
it follows from \eqref{nr2} that 
\begin{equation}
\jb{n}^{\eps_2} \jb{n_2}^{\eps_2} \les \jb{\tau_1 + |n_1|^2}^{\frac 12 - \dl_1}.
\label{tn1}
\end{equation}

\noi
Then, letting
$\ft V(\tau_2, n_2) = \jb{n_2}^{-\eps_2} \jb{\tau_2 + |n_2|^2}^{-\frac 12 - \dl_2} \ft v(\tau_2, n_2)$
and 
$\ft W(\tau, n) = \jb{n}^{-\eps_2} \jb{\tau + |n|^2}^{-\frac 12 - \dl_2} \ft w(\tau, n)$, 
it follows from 
 the Cauchy-Schwarz inequality in $\tau_1$ and $n_1$, \eqref{tn1}, Plancherel's theorem, H\"older's inequality, and the $L^4$-Strichartz estimate (Lemma \ref{LEM:L4}) that 
\begin{align*}
 \text{LHS of \eqref{bilin3}} 
&\les \|u\|_{L^2_{t, x}} 
\| V \|_{L^4_{t, x}}\|W\|_{L^4_{t, x}}\\
&\les \|u\|_{L^2_{t, x}} 
\| V \|_{X^{\eps_2, \frac 12 +\dl_2}} \|W\|_{X^{\eps_2, \frac 12 +\dl_2}} \\
&= \| u \|_{L_{t,x}^2} \| v \|_{L_{t,x}^2} \| w \|_{L_{t,x}^2}
\end{align*}

\noi
for any $\dl_2 > 0$.

\medskip
\noi
$\bullet$ \textbf{Case 3:} $\text{MAX} = \jb{\tau_2 + |n_2|^2}$.

This case follows from proceeding as in Case 2
and thus we omit details.

\subsection{Resonant interaction}
\label{SUBSEC:res}
We now consider the resonant case $|n \cdot n_2| \ll |n|^\eps |n_2|^\eps$.
With $\ta = \ta (n, n_2) = \frac{1}{|n|^{1-\eps} |n_2|^{1-\eps}}$, we rewrite this condition as
\begin{align*}
|n \cdot n_2| \ll \ta |n| |n_2|.
\end{align*}

\noi
In the following, we divide the argument  into three subcases, depending on the sizes
of $n$ and~$n_2$.

\medskip \noi
$\bullet$ \textbf{Case 1:} $|n|^{\frac 12} \les |n_2| \les |n|^2$.

In this case, we follow the idea from the proof of Proposition 10.1 in \cite{Tao4} (see also \cite{CKSTT08}).

\smallskip \noi
$\circ$ \textbf{Subcase 1.a:} $|n| \sim |n_2|$.

By applying the dyadic decompositions to the frequencies $n$ and $n_2$, we have
\begin{equation}
\begin{split}
\text{LHS of } \eqref{bilin3} 
&\les \sum_{\substack{N \sim N_2 \geq 1\\\text{dyadic}} }
\bigg| \sum_{\substack{n, n_1\in \Z^2\\ n = n_1 - n_2}} \iintt_{\tau = \tau_1 - \tau_2} 
\ind_{|\cos \angle(n, n_2)| \ll \ta} \\
&\quad \times \frac{\ft u (\tau_1, n_1) \cj{\ft{P_{N_2}v} (\tau_2, n_2)} \cj{\ft{P_N w} (\tau, n)}}{\jb{\tau_1 + |n_1|^2}^{\frac 12 + \dl_2} \jb{\tau_2 + |n_2|^2}^{\frac 12 + \dl_2} \jb{\tau + |n|^2}^{\frac 12 - \dl_1}} d\tau_1 d\tau \bigg|.
\end{split}
\label{bilin_a1}
\end{equation}

\noi
Note that when $|n| \sim |n_2| \sim N$, we have $\ta = \frac{1}{|n|^{1 - \eps} |n_2|^{1 - \eps}} \sim N^{-2 + 2 \eps}$. We now restrict $n$ and $n_2$ to the following angular sectors:
\begin{align}
\begin{split}
A_\ell : \!& = \big\{ n \in \Z^2: |n| \sim N, \,  \arg(n) = \ell N^{-2 + 2 \eps} + O(N^{-2 + 2 \eps}) \big\}, \\
A_{\ell_2} :\!&= \big\{ n_2 \in \Z^2: |n_2| \sim N_2, \, \arg(n_2) = \ell_2 N^{-2 + 2 \eps} + O(N^{-2 + 2 \eps}) \big\},
\end{split}
\label{A0}
\end{align}

\noi
where $\ell, \ell_2 \in I_N: = [1, 2\pi N^{2 - \eps}] \cap \Z$.
Since $|\cos \angle (n, n_2)| \ll \ta \sim N^{-2 + 2\eps}$, we have $|\arg(n) - \arg(n_2)| = \frac{\pi}{2} + O(N^{-2 + 2\eps})$ or $|\arg(n) - \arg(n_2)| = \frac{3\pi}{2} + O(N^{-2 + 2\eps})$. This means that for each fixed $\ell$, there exists a set  $L_2(\l)$ of size $O(1)$ such that 
$\ind_{A_\ell} (n)\cdot \ind_{A_{\ell_2}}(n_2)= 0$
unless $\l_2 \in L_2(\l)$.

Continuing with the right-hand side of \eqref{bilin_a1}, 
we insert the angular restrictions \eqref{A0}
and set 
\[ \ft v_{N_2, \l_2}(\tau_2, n_2) =  \ind_{A_{\ell_2}}(n_2)  \cdot \ft{P_{N_2}v} (\tau_2, n_2)
\quad \text{and}\quad
\ft w_{N, \l} (\tau, n) = \ind_{A_\ell}(n) \cdot \ft{P_N w} (\tau, n). \]

\noi
 Then, by 
applying  the Cauchy-Schwarz inequality in $\tau_1$ and $n_1$, 
then  in $\tau$ and $n$, and H\"older's inequality in $\tau_1$ and $n_1$, 
we obtain
\begin{equation}
\begin{split}
\text{LHS of } \eqref{bilin3}
 &\les \sum_{\substack{N \sim N_2 \ge 1\\\text{dyadic}}} 
 \sum_{\ell  \in I_N} 
 \sum_{ \ell_2 \in L_2(\ell)} 
 \bigg| \sum_{\substack{n, n_1\in \Z^2\\ n = n_1 - n_2}} 
 \iintt_{\tau = \tau_1 - \tau_2}  \ind_{|\cos \angle (n, n_2)| \ll N^{-2 + 2\eps}} \\
&\quad \times \frac{ \ft u (\tau_1, n_1) \cj{\ft v_{N_2, \l_2} (\tau_2, n_2) }
  \cj{\ft  w_{N, \l} (\tau, n) }}
  {\jb{\tau_1 + |n_1|^2}^{\frac 12 + \dl_2} \jb{\tau_2 + |n_2|^2}^{\frac 12 + \dl_2} \jb{\tau + |n|^2}^{\frac 12 - \dl_1}} d\tau_1 d\tau \bigg| \\
&\leq  \sum_{\substack{N \sim N_2 \ge 1\\\text{dyadic}}} 
 \sum_{\ell  \in I_N} 
 \sum_{ \ell_2 \in L_2(\ell)} 
  \| \ft u \|_{L_{\tau_1}^2 \ell_{n_1}^2} \bigg\| \sum_{n\in \Z^2} \int \ind_{|\cos \angle (n, n_1 - n)| \ll N^{-2 + 2\eps}} \\
&\quad \times \frac{ \ft v_{N_2, \l_2} (\tau_1 - \tau , n_1- n) 
  \ft  w_{N, \l} (\tau, n) }{ \jb{\tau_1 - \tau + |n_1 - n|^2}^{\frac 12 + \dl_2} } d\tau \bigg\|_{L_{\tau_1}^2 \ell_{n_1}^2} \\
&\leq \| u \|_{L_{t, x}^2} \sum_{\substack{N \sim N_2 \ge 1\\\text{dyadic}}} 
 \sum_{\ell  \in I_N} 
 \sum_{ \ell_2 \in L_2(\ell)} 
  \| \ft v_{N_2, \l_2} \|_{L_{\tau_2}^2 \ell_{n_2}^2} \| \ft w_{N, \l}\|_{L_\tau^2 \ell_n^2} \\
&\quad \times \sup_{\tau_1, n_1} \bigg( \sum_n \int \frac{ \ind_{|\cos \angle (n, n_1 - n)| \ll N^{-2 + 2\eps}} \ind_{A_{\ell_2}}(n_1 - n)  \cdot \ind_{A_\ell}(n)}{\jb{\tau_1 - \tau + |n_1 - n|^2}^{1 + 2 \dl_2}} d\tau \bigg)^{\frac 12} \\
&\les \| u \|_{L_{t, x}^2 }  \sum_{\substack{N \sim N_2 \ge 1\\\text{dyadic}}} 
\sum_{\ell  \in I_N} 
 \sum_{ \ell_2 \in L_2(\ell)} 
  \| \ft v_{N_2, \l_2} \|_{L_{\tau_2}^2 \ell_{n_2}^2} \| \ft w_{N, \l}\|_{L_\tau^2 \ell_n^2}\\
& \quad \times \sup_{n_1, \l, \l_2} |\mathcal{A}_{N, \l, \l_2}(n_1)|^\frac 12 , 
\end{split}
\label{bilin_a2}
\end{equation}

\noi
where $\mathcal{A}_{N, \l, \l_2}(n_1)$ is defined by 
\[  \mathcal{A}_{N, \l, \l_2}(n_1) = \big\{ n \in \Z^2: |\cos \angle (n, n_1 - n)| \ll N^{-2 + 2\eps}, \, n \in A_\ell,  \, n_1 - n \in A_{\ell_2}\big \}.\]

In view of \eqref{A0}, 
for  fixed $n_1 \in \Z^2$ and fixed $\ell, \ell_2\in I_N$, 
we note that any $n\in \mathcal{A}_{N, \l. \l_2}$
is contained in  a rectangle with sides of length $\sim$
\[ |n| N^{-2 + 2\eps} \sim N^{-1 + 2\eps} \quad \text{and} \quad |n_1 - n| N^{-2 + 2\eps} \sim N^{-1 + 2\eps}. \]

\noi
Hence, for  $0 < \eps < \frac 12$,
we have 
\begin{align}
\sup_{n_1, \l,  \l_2} |\mathcal{A}_{N, \l, \l_2}(n_1)|\les 1.
\label{A2}
\end{align}

\noi
Therefore, 
by estimating  \eqref{bilin_a2} 
with \eqref{A2}, 
 the Cauchy-Schwarz inequality in $\ell$, and then in $N \sim N_2$ we obtain the desired inequality \eqref{bilin3}.

\smallskip \noi
$\circ$ \textbf{Subcase 1.b:} $|n_2| \ll |n| \les |n_2|^2$.

 Since $|n| \gg |n_2|$ and $n_1 = n + n_2$, we have $|n_1| \sim |n|$. 
%
Note that when  $|n| \sim N \les |n_2|^2$, we have $\ta = \frac{1}{|n|^{1 - \eps} |n_2|^{1 - \eps}} \les N^{-\frac 32 + \frac 32 \eps}$. We proceed as in Subcase 1.a by restricting $n$ and $n_2$ to the following angular sectors:
\begin{align}
\begin{split}
B_\l : \! & = \big\{ n \in \Z^2: |n| \sim N, \arg(n) = \ell N^{-\frac 32 + \frac 32 \eps} + O(N^{-\frac 32 + \frac 32 \eps}) \big\}, \\
B_{\l_2} :\! & = \big\{ n_2 \in \Z^2: N^{\frac 12} \les |n_2| \ll N, \arg(n_2) = \ell_2 N^{-\frac 32 + \frac 32 \eps} + O(N^{-\frac 32 + \frac 32 \eps}) \big\},
\end{split}
\label{B0}
\end{align}

\noi
where $\ell, \ell_2 \in I_N': = [1, 2\pi N^{\frac 32 - \frac 32\eps}] \cap \Z$.
Under $|\cos \angle (n, n_2)| \ll N^{-\frac 32 + \frac 32 \eps}$, we have $|\arg(n) - \arg(n_2)| = \frac{\pi}{2} + O(N^{-\frac 32 + \frac 32 \eps})$ or $|\arg(n) - \arg(n_2)| = \frac{3\pi}{2} + O(N^{-\frac 32 + \frac 32 \eps})$. 
Hence, 
 for each fixed $\ell$, there exists a set  $L_2'(\l)$ of size $O(1)$ such that 
$\ind_{B_\ell} (n)\cdot \ind_{B_{\ell_2}}(n_2)= 0$
%
unless $\l_2 \in L_2'(\l)$.
With a slight abuse of notation, set 
\[ \ft v_{N, \l_2}(\tau_2, n_2) =  \ind_{B_{\ell_2}}(n_2)  \cdot \ft{v} (\tau_2, n_2)
\quad \text{and}\quad
\ft w_{N, \l} (\tau, n) = \ind_{B_\ell}(n) \cdot \ft{P_N w} (\tau, n). \]

\noi
 Then, by proceeding as in \eqref{bilin_a2}, i.e.~by
applying  the Cauchy-Schwarz inequality in $\tau_1$ and $n_1$, 
then  in $\tau$ and $n$, and H\"older's inequality in $\tau_1$ and $n_1$, 
we obtain
\begin{align*}
\text{LHS of } \eqref{bilin3}
 &\les \sum_{\substack{N \sim N_1\ge 1\\\text{dyadic}}}
   \sum_{\ell  \in I_N'} 
 \sum_{ \ell_2 \in L_2'(\ell)} 
   \bigg| \sum_{\substack{n, n_1\in \Z^2\\ n = n_1 - n_2}} \iintt_{\tau = \tau_1 - \tau_2} \ind_{|\cos \angle (n, n_2)| \ll N^{-\frac 32 + \frac 32 \eps}}  \\
&\quad \times \frac{\ft{P_{N_1} u} (\tau_1, n_1) \cj{\ft v_{N, \l_2} (\tau_2, n_2)}
 \cj{\ft w_{N, \l} (\tau, n)}}{\jb{\tau_1 + |n_1|^2}^{\frac 12 + \dl_2} \jb{\tau_2 + |n_2|^2}^{\frac 12 + \dl_2} \jb{\tau + |n|^2}^{\frac 12 - \dl_1}} d\tau_1 d\tau \bigg| \\
&\leq  \sum_{\substack{N \sim N_1  \ge 1\\\text{dyadic}}}
   \sum_{\ell  \in I_N'} 
 \sum_{ \ell_2 \in L_2'(\ell)} 
\big\| \ft{P_{N_1} u} \big\|_{L_{\tau_1}^2 \ell_{n_1}^2} \bigg\| \sum_{n \in  \Z^2} \int \ind_{|\cos \angle (n, n_1 - n)| \ll N^{-\frac 32 + \frac 32 \eps}}  \\
&\quad \times \frac{\ft v_{N, \l_2}  (\tau_1 - \tau, n_1 - n)\ft w_{N, \l}(\tau, n) }{\jb{\tau_1 - \tau + |n_1 - n|^2}^{\frac 12 + \dl_2}} d\tau \bigg\|_{L_{\tau_1}^2 \ell_{n_1}^2} \\
&\leq  \sum_{\substack{N \sim N_1 \ge 1\\\text{dyadic}}}
    \sum_{\ell  \in I_N'} 
 \sum_{ \ell_2 \in L_2'(\ell)} 
  \| \ft{P_{N_1} u} \|_{L_{\tau_1}^2 \ell_{n_1}^2} \| \ft v \_{N, \l_2}  \|_{L_{\tau_2}^2 \ell_{n_2}^2} 
  \| w_{N, \l} \|_{L_\tau^2 \ell_n^2} \\
&\quad \times \sup_{\tau_1, n_1} \bigg( \sum_n \int \frac{\ind_{|\cos \angle (n, n_1 - n)| \ll N^{-\frac 32 + \frac 32 \eps}} \ind_{B_{\ell_2}} (n_1 - n)\cdot  \ind_{B_\ell} (n)}{\jb{\tau_1 - \tau + |n_1 - n|^2}^{1 + 2 \dl_2}} d\tau \bigg)^{1/2} \\
&\les  \sum_{\substack{N \sim N_1 \ge 1\\\text{dyadic}}}
    \sum_{\ell  \in I_N'} 
 \sum_{ \ell_2 \in L_2'(\ell)} 
  \| \ft{P_{N_1} u} \|_{L_{\tau_1}^2 \ell_{n_1}^2} \| \ft v_{N, \l_2} \|_{L_{\tau_2}^2 \ell_{n_2}^2} 
  \| \ft w_{N, \l_2} \|_{L_\tau^2 \ell_n^2} \\
& \quad \times \sup_{n_1} |\mathcal{B}_{N, \l, \l_2}(n_1)|^\frac 12, 
\end{align*}

\noi
where $\mathcal{B}_{N, \l, \l_2}(n_1)$ is defined by 
\[ \mathcal{B}_{N, \l, \l_2}(n_1)
 = \big \{ n \in \Z^2: |\cos \angle (n, n_1 - n)| \ll N^{-\frac 32 + \frac 32 \eps},, n \in B_\ell,,  n_1 - n \in B_{\ell_2} \big\}. \]

\noi
Then, the desired bound \eqref{bilin3} follows from 
 the Cauchy-Schwarz inequality in $\ell$ and then in  the Cauchy-Schwarz inequality in $N \sim N_1$,
 once we prove 
\begin{align}
\sup_{n_1, \l,  \l_2} |\mathcal{B}_{N, \l, \l_2}(n_1)|\les 1.
\label{B2}
\end{align}

In view of \eqref{B0}, 
for  fixed $n_1 \in \Z^2$ and fixed $\ell, \ell_2\in I_N'$, 
we note that any $n\in \mathcal{B}_{N, \l. \l_2}$
is contained in  a rectangle with sides of length $\sim$
\[ |n| N^{-\frac 32 + \frac 32 \eps} \sim N^{-\frac 12 + \frac 32 \eps} \quad \text{and} \quad |n_1 - n| N^{-\frac 32 + \frac 32 \eps} \ll N^{-\frac 12 + \frac 32 \eps}. \] 

\noi
Therefore, as long as $0 < \eps < \frac 13$, there are at most $O(1)$ many 
$n'$s in $ \mathcal{B}_{N, \l. \l_2}$, yielding \eqref{B2}.

\smallskip \noi
$\circ$ \textbf{Subcase 1.c:} $|n| \ll |n_2|\les |n|^2$.
\\ \indent
This subcase follows from Subcase 1.b by switching the roles of $(\tau, n)$ and $(\tau_2, n_2)$
and thus we omit details.

\medskip \noi
$\bullet$ \textbf{Case 2:} $|n|^2 \ll |n_2|$.

We divide the argument into two subcases, depending on the sizes
of  $N_2$ and the largest modulation $L_{\textup{max}} = \max (L_0, L_1, L_2)$. 

\medskip \noi
$\circ$ \textbf{Subcase 2.a:} $L_{\textup{max}} \ges N_2$ (high-modulation case).

In this subcase, recalling the notations in Section \ref{SEC:2}, we dyadically decompose
 the spatial frequencies and modulations of  $u$, $v$, and $w$ so that
  $\textup{supp } \ft {P_{N_1, L_1} u} \subset \Pf_{N_1} \cap \Sf_{L_1}$, 
   $\textup{supp } \ft {P_{N_2, L_2} v } \subset \Pf_{N_2} \cap \Sf_{L_2}$, and $\textup{supp } \ft{P_{N_0, L_0} w} \subset \Pf_{N_0} \cap \Sf_{L_0}$ for dyadic $N_j, L_j \geq 1$, $j = 0,1,2$. Note that we have $N_0^2 \ll N_2 \sim N_1$. Our main goal is to show the following lemma.

\begin{lemma}\label{LEM:high}
Let $N_j, L_j \geq 1$, $j = 0,1,2$, be dyadic numbers with $N_0^2 \ll N_2 \sim N_1$ and $L_{\textup{max}} \ges N_2$. Suppose that $f, g, h \in L^2(\R \times \Z^2)$ are real-valued nonnegative functions such that 
\begin{align}
 \textup{supp } f \subset \Pf_{N_1} \cap \Sf_{L_1}, \quad \textup{supp } g \subset \Pf_{N_2} \cap \Sf_{L_2}, \quad \text{and} \quad  \textup{supp } h \subset \Pf_{N_0} \cap \Sf_{L_0}.  
\label{high1}
 \end{align}
Then, we have
\begin{equation}
\begin{split}
\bigg| \sum_{\substack{n, n_1 \in \Z^2\\ n = n_1 - n_2}} &\iintt_{\tau = \tau_1 - \tau_2} f(\tau_1, n_1) g(\tau_2, n_2) h(\tau, n) d\tau d\tau_1 \bigg| \\
&\les L_1^{\frac 12 +} L_2^{\frac 12 +} L_0^{\frac 14 +} N_2^{0-} \| f \|_{L_{\tau_1}^2 \ell_{n_1}^2} \| g \|_{L_{\tau_2}^2 \ell_{n_2}^2} \| h \|_{L_\tau^2 \ell_n^2}.
\end{split}
\label{high}
\end{equation}
\end{lemma}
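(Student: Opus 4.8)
\emph{Proof idea for Lemma~\ref{LEM:high}.}
The plan is to bound the trilinear sum in \eqref{high} by pairing two of its three factors, applying the Cauchy--Schwarz inequality to the third, and estimating the paired product by the bilinear Strichartz estimate (Lemma~\ref{LEM:bilin}). Let $F = \F_{t,x}^{-1}f$, $G = \F_{t,x}^{-1}g$, $H = \F_{t,x}^{-1}h$. Since $f,g,h$ are nonnegative and real-valued, the left-hand side of \eqref{high} equals each of
\[
\big\langle \ind_{\Pf_{N_0}}\F_{t,x}(F\cj G),\, h\big\rangle,\qquad \big\langle \F_{t,x}(GH),\, f\big\rangle,\qquad \big\langle \F_{t,x}(F\cj H),\, g\big\rangle
\]
(inner products in $L^2_\tau\ell^2_n$), so by the Cauchy--Schwarz inequality it is bounded by the $L^2_\tau\ell^2_n$-norm of the peeled-off factor times the $L^2_\tau\ell^2_n$-norm of the paired product. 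I would split into three cases according to which of $L_0,L_1,L_2$ equals $L_{\textup{max}}$ and, in each case, peel off the factor carrying $L_{\textup{max}}$. Peeling off the \emph{largest}-modulation factor is essential: the two paired factors then carry the two smallest modulations, so the factor $(\overline L/N+1)^{1/2}$ returned by Lemma~\ref{LEM:bilin} involves only these and never costs a power of $L_{\textup{max}}$. Moreover, $N_0$ is always one of the three frequencies in the bilinear interaction --- for $F\cj G$ one localizes the output frequency to $\Pf_{N_0}$, which is legitimate since $h$ is supported in $\Pf_{N_0}$, while for $GH$ and $F\cj H$ the output frequency is automatically of size $\sim N_2$ and $N_0$ is the smaller input frequency --- so Lemma~\ref{LEM:bilin} always provides the gain $N_{\textup{min}}^{1/2} = N_0^{1/2}$; together with $N_0 \les N_2^{1/2}$, which follows from the hypothesis $N_0^2\ll N_2$, this is what ultimately yields a negative power of $N_2$.

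The pairings $F\cj G$ and $F\cj H$ are of the form covered directly by Lemma~\ref{LEM:bilin}; for $GH$ one first notes $\|GH\|_{L^2_{t,x}} = \|G\cj H\|_{L^2_{t,x}}$ and applies Lemma~\ref{LEM:bilin} to $G\cj H$. To exhibit the remaining bookkeeping, take $L_{\textup{max}} = L_1$: applying Lemma~\ref{LEM:bilin} to $G$ (frequency $N_2$, modulation $L_2$) and $H$ (frequency $N_0$, modulation $L_0$), and using $\overline L_{02}=\max(L_0,L_2)\le L_1$ and $N_1\sim N_2$, bounds the left-hand side of \eqref{high} by $\|f\|_{L^2\ell^2}\|g\|_{L^2\ell^2}\|h\|_{L^2\ell^2}$ times $\underline L_{02}^{1/2}(\overline L_{02}/N_2+1)^{1/2}N_0^{1/2}$. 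One then expands $(\overline L_{02}/N_2+1)^{1/2}\les(\overline L_{02}/N_2)^{1/2}+1$ and uses $N_0\les N_2^{1/2}$, which splits the middle factor into $(L_0L_2)^{1/2}N_2^{-1/4}$ and $\underline L_{02}^{1/2}N_2^{1/4}$; distributing the surviving power $L_0^{1/2}$ as $L_0^{1/4}L_1^{1/4}$ (since $L_0\le L_1 = L_{\textup{max}}$) and absorbing the leftover positive power $N_2^{1/4}$ via $L_1 = L_{\textup{max}}\ges N_2$, both pieces are seen to be $\les L_0^{1/4+}L_1^{1/2+}L_2^{1/2+}N_2^{0-}$. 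The cases $L_{\textup{max}} = L_0$ (peel off $h$ and estimate $\F_{t,x}(F\cj G)$ localized to $\Pf_{N_0}$, which gives the bound $(L_1L_2)^{1/2}+\underline L_{12}^{1/2}N_0^{1/2}$, then use $L_0\ges N_2$) and $L_{\textup{max}} = L_2$ (peel off $g$ and estimate $\F_{t,x}(F\cj H)$) are identical in structure.

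The main obstacle is precisely this bookkeeping: Lemma~\ref{LEM:bilin} naturally returns a full power $L_0^{1/2}$ (through $\overline L$ or $\underline L$) together with a genuinely \emph{positive} power $N_2^{1/4}$, whereas \eqref{high} allows only $L_0^{1/4+}$ and demands a negative power of $N_2$; the estimate closes only because $L_0\le L_{\textup{max}}$ and $L_{\textup{max}}\ges N_2$, which is why one must peel off the largest-modulation factor and split every surviving power as $L_j^{1/2}\le L_j^{1/4}L_{\textup{max}}^{1/4}$ rather than naively. Note that the resonance condition $|n\cdot n_2|\ll|n|^\eps|n_2|^\eps$ is never used here: only the frequency gap $N_0^2\ll N_2\sim N_1$ and the high-modulation hypothesis $L_{\textup{max}}\ges N_2$ enter. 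Once \eqref{high} is proved, the contribution of Subcase~2.a to \eqref{bilin3} follows by dyadic decomposition in the spatial frequencies and modulations: the modulation weights, of sizes $\sim L_1^{-1/2-\dl_2}$, $\sim L_2^{-1/2-\dl_2}$, and $\sim L_0^{-1/2+\dl_1}$, turn the right-hand side of \eqref{high} into a geometrically convergent series in $L_0,L_1,L_2$ (since $\dl_1$ is small), while the factor $N_2^{0-}$ absorbs the summations over $N_0$ with $N_0^2\ll N_2$ and over $N_1\sim N_2$ after a Cauchy--Schwarz in the dyadic parameters.
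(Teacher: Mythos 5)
Your argument is correct and rests on the same two ingredients as the paper's proof: Cauchy--Schwarz to peel off one factor, the bilinear Strichartz estimate (Lemma~\ref{LEM:bilin}) for the remaining pair, and the hypotheses $N_0^2 \ll N_2$ and $L_{\textup{max}} \ges N_2$ to trade the leftover positive power $N_2^{1/4}$ for $L_{\textup{max}}^{1/4+}N_2^{0-}$. The difference is organizational: you split into three cases according to which modulation is largest and always peel off the maximal-modulation factor, whereas the paper uses a single uniform estimate, always peeling off $h$ and applying Lemma~\ref{LEM:bilin} to the $f$--$g$ pair (output localized to $\Pf_{N_0}$, so $N_{\min}^{1/2}=N_0^{1/2}\les N_2^{1/4}$), obtaining $L_1^{1/2}L_2^{1/2}+\underline{L}_{12}^{1/2}N_2^{1/4}$ and then concluding exactly as you do in your case $L_{\textup{max}}=L_0$. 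In particular, your assertion that peeling off the largest-modulation factor is \emph{essential} is not accurate: the target \eqref{high} already tolerates full powers $L_1^{1/2+}L_2^{1/2+}$, and the only constraint is that $L_0$ appear with exponent strictly below $\tfrac12$; pairing $f$ and $g$ guarantees this automatically, since $L_0$ then enters only through the final substitution $N_2^{1/4}\les L_{\textup{max}}^{1/4+}N_2^{0-}$, regardless of which $L_j$ realizes $L_{\textup{max}}$. (The place where the pairing genuinely must be chosen with care is the variant in Case 3 and in Remark~\ref{REM:bilin}, not here.) Your case analysis is nevertheless valid --- the bookkeeping in each case, including the reduction $\|GH\|_{L^2_{t,x}}=\|G\cj H\|_{L^2_{t,x}}$ and the observation that the output frequency is then automatically $\sim N_2$ with $N_0$ the smallest frequency, is sound --- it is simply longer than necessary. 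Your closing description of how \eqref{high} is summed over dyadic $N_j, L_j$ to give Subcase~2.a of \eqref{bilin3} matches the paper.
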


We first present the proof of \eqref{bilin3} in this case by assuming Lemma \ref{LEM:high}.
Under $N_0^2 \ll N_2 \sim N_1$ and $L_{\textup{max}} \ges N_2$, 
by  applying Lemma \ref{LEM:high}, we have 
\begin{align*}
\text{LHS of \eqref{bilin3}} &\les \sum_{\substack{ N_1, N_2, N_0\ge 1\\\text{dyadic}}}
 \sum_{\substack{L_1, L_2, L_0 \ge 1\\\text{dyadic}}} L_1^{-\frac 12 - \dl_2} L_2^{-\frac 12 - \dl_2} L_0^{-\frac 12 + \dl_1} \\
&\qquad \times L_1^{\frac 12 +} L_2^{\frac 12 +} L_0^{\frac 14 +} N_2^{0-} \| P_{N_1, L_1} u \|_{L_{t,x}^2} \| P_{N_2, L_2} v \|_{L_{t,x}^2} \| P_{N_0, L_0} w \|_{L_{t,x}^2} \\
&\leq \sum_{\substack{N_1, N_2, N_0\ge 1\\\text{dyadic}}} \sum_{\substack{ L_1, L_2, L_0\ge 1\\\text{dyadic}}} L_1^{0-} L_2^{0-} L_0^{0-} N_2^{0-} \| u \|_{L_{t,x}^2} \| v \|_{L_{t,x}^2} \| w \|_{L_{t,x}^2} \\
&\les \| u \|_{L_{t,x}^2} \| v \|_{L_{t,x}^2} \| w \|_{L_{t,x}^2},
\end{align*}
as desired.

We now present the proof of Lemma \ref{LEM:high}.

\begin{proof}[Proof of Lemma \ref{LEM:high}]
By the Cauchy-Schwarz inequality in $\tau$ and $n$, Lemma \ref{LEM:bilin}, and the condition $L_{\textup{max}} \ges N_2 \gg N_0^2$, we have
\begin{align*}
\text{LHS of \eqref{high}} &\les \| h \|_{L_\tau^2 \ell_n^2} \underline{L}_{12}^{\frac 12} 
\bigg( \frac{\overline{L}_{12}}{N_0} + 1 \bigg)^{\frac 12} N_0^{\frac 12} \| f \|_{L_{\tau_1}^2 \ell_{n_1}^2} \| g \|_{L_{\tau_2}^2 \ell_{n_2}^2} \\
&\les \big( L_1^{\frac 12} L_2^{\frac 12} + \underline{L}_{12}^{\frac 12} N_2^{\frac 14} \big) \| f \|_{L_{\tau_1}^2 \ell_{n_1}^2} \| g \|_{L_{\tau_2}^2 \ell_{n_2}^2} \| h \|_{L_\tau^2 \ell_n^2} \\
&\les L_1^{\frac 12 +} L_2^{\frac 12 +} L_0^{\frac 14 +} N_2^{0-} \| f \|_{L_{\tau_1}^2 \ell_{n_1}^2} \| g \|_{L_{\tau_2}^2 \ell_{n_2}^2} \| h \|_{L_\tau^2 \ell_n^2}, 
\end{align*}

\noi
yielding \eqref{high}.
\end{proof}

\medskip \noi
$\circ$  \textbf{Subcase 2.b:} $L_{\textup{max}} \ll N_2$ (low-modulation case).

Our goal in this subcase is to show the following lemma.
\begin{lemma}\label{LEM:low}
Let $N_1, N_2, L_0, L_1, L_2 \geq 1$ be dyadic numbers with $N_1 \sim N_2$ and $L_{\textup{max}} \ll N_2$. Suppose that $f, g, h \in L^2(\R \times \Z^2)$ are real-valued nonnegative functions such that
\begin{align*}
&\textup{supp } f \subset \Pf_{N_1} \cap \Sf_{L_1}, \quad \textup{supp } g \subset \Pf_{N_2} \cap \Sf_{L_2}, 
\quad \text{and}\\
&\textup{supp } h \subset \{n \in \Z^2: 1 \leq |n|^2 \ll N_2 \} \cap \Sf_{L_0}.
\end{align*}
Then, we have 
\begin{equation}
\begin{split}
\bigg| \sum_{\substack{n, n_1\in \Z^2\\ n = n_1 - n_2}} &\iintt_{\tau = \tau_1 - \tau_2} f(\tau_1, n_1) g(\tau_2, n_2) h(\tau, n) \cdot \ind_{|\cos \angle(n, n_2)| \ll \ta} \, d\tau d\tau_1 \bigg| \\
&\les L_{\textup{med}}^{\frac 38} L_{\textup{max}}^{\frac 38} \| f \|_{L_{\tau_1}^2 \ell_{n_1}^2} \| g \|_{L_{\tau_2}^2 \ell_{n_2}^2} \| h \|_{L_\tau^2 \ell_n^2},
\end{split}
\label{low}
\end{equation}
where $\ta = \frac{1}{|n|^{1-\eps} |n_2|^{1-\eps}} \ll 1$.
\end{lemma}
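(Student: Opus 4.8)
The plan is to reduce \eqref{low} to a lattice‑point count by the same two‑fold Cauchy--Schwarz scheme already used in \eqref{bilin_a2}, and then to combine the angular near‑orthogonality $|\cos\angle(n,n_2)|\ll\ta$ with the curvature of the paraboloid, packaged in Lemma~\ref{LEM:count}. A useful preliminary remark is that in this resonant, low‑modulation regime the identity \eqref{phase}, together with $|n\cdot n_2|\ll|n|^{\eps}|n_2|^{\eps}$, forces the two largest of the three modulations $\tau+|n|^2,\ \tau_1+|n_1|^2,\ \tau_2+|n_2|^2$ to be comparable, i.e.\ $L_{\textup{med}}\sim L_{\textup{max}}$ (outside the trivial range $L_{\textup{max}}\les N_2^{C\eps}$). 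Hence it suffices to prove the bound with the weight $\min(L_0,L_2)^{3/4}$ in place of $L_{\textup{med}}^{3/8}L_{\textup{max}}^{3/8}$, since $\min(L_0,L_2)^{3/4}\le L_{\textup{med}}^{3/4}\le L_{\textup{med}}^{3/8}L_{\textup{max}}^{3/8}$.

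First I would dyadically localize $h=\sum_{N_0}h_{N_0}$ with $\supp h_{N_0}\subset\Pf_{N_0}\cap\Sf_{L_0}$ and $1\le N_0\ll N_2^{1/2}$, so that $\ta\sim N_0^{-(1-\eps)}N_2^{-(1-\eps)}$ on the support of $\widehat{h_{N_0}}$, and then decompose $h_{N_0}$ and $g$ into angular sectors of aperture $\sim\ta$ about the $n$‑ and $n_2$‑directions. As in Subcase~1.b, $|\cos\angle(n,n_2)|\ll\ta$ makes each $n$‑sector meet only $O(1)$ of the $n_2$‑sectors. For a fixed admissible pair of sectors, Cauchy--Schwarz in $(\tau_1,n_1)$ extracts $\|f\|_{L^2_{t,x}}$ and Cauchy--Schwarz in $(\tau,n)$ extracts $\|g\|_{L^2_{t,x}}\|h_{N_0}\|_{L^2_{t,x}}$ together with the square root of
\[
\sup_{\tau_1,\,n_1}\ \sum_{n}\int \ind_{|\cos\angle(n,\,n_1-n)|\ll\ta}\;\ind_{\Sf_{L_0}}(\tau,n)\;\ind_{\Sf_{L_2}}(\tau_1-\tau,\,n_1-n)\,d\tau .
\]
Carrying out the $\tau$‑integration costs $\les\min(L_0,L_2)$, and one is left to count the $n\in\Z^2$ with $|n|\sim N_0$ that can occur: these lie simultaneously in the thin sector about $(n_1-n)^{\perp}$, in the annulus $\{\,|\,|n|^2+\tau\,|\les L_0\,\}$ coming from the modulation of $h$, and in the slab $\{\,|\tau_1-\tau+|n_1-n|^2|\les L_2\,\}$ coming from the modulation of $g$, the last slab being of width $\les L_2/N_2\ll1$ and, by the angular restriction, essentially transverse to the sector. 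Rotating $n_1-n$ onto a coordinate axis brings this into precisely the configuration of Lemma~\ref{LEM:count}: an annulus of radius $\sim N_0$ and width $\mu\sim L_0/N_0$ (a disk of radius $\sim L_0^{1/2}$ when $N_0^2\les L_0$), intersected with a coordinate slab of width $\nu\les N_0^{\eps}N_2^{\eps-1}\ll1$; Lemma~\ref{LEM:count} then gives a count that is far smaller than the crude $N_0$ one would get by ignoring the curvature.

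The heart of the matter is to organize the $\tau$‑integration and the summations over the angular sectors and over $N_0$ so that there is no loss in $N_2$ and the weight $\min(L_0,L_2)^{3/4}$ emerges. Here the orthogonalities $\sum_{\ell}\|h_{N_0,\ell}\|^2=\|h_{N_0}\|^2$ and $\sum_{N_0}\|h_{N_0}\|^2=\|h\|^2$ allow Cauchy--Schwarz to collapse the angular and dyadic sums, provided the per‑scale count is controlled by a monotone power of $N_0$; distinguishing the regimes $N_0^2\les\min(L_0,L_2)$ and $N_0^2\gg\min(L_0,L_2)$ (disk versus genuine annulus for the $h$‑modulation set), the resulting $N_0$‑sum is a convergent geometric series peaked at the critical scale $N_0\sim\min(L_0,L_2)^{1/2}$, which produces exactly the factor $\min(L_0,L_2)^{1/2}$ beyond the $\min(L_0,L_2)^{1/2}$ already gained from the $\tau$‑integration. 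The main obstacle, as indicated, is precisely this uniform lattice‑point estimate: a naive slab count over $|n|\sim N_0$ permits $\sim N_0$ points and, after the $N_0$‑sum, an inadmissible power of $N_2$; it is essential to exploit the curvature of the paraboloid through Lemma~\ref{LEM:count}, together with the transversality of the two modulation slabs guaranteed by the angular restriction, rather than raw planar geometry — which, as in Remark~\ref{REM:1}, is also why the argument remains valid on an arbitrary flat torus.
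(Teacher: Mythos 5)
There is a genuine gap, and it sits exactly at the step you yourself flag as the heart of the matter: the lattice-point count. Your scheme counts over the \emph{low} output frequency $n$ (with $(\tau_1,n_1)$ frozen), and you want to feed Lemma \ref{LEM:count} the annulus $\{|\,|n|^2+\tau|\les L_0\}$ of radius $\sim N_0$ and width $L_0/N_0$ \emph{and at the same time} claim the full gain $\min(L_0,L_2)$ from the $\tau$-integration. These are two incompatible uses of the same $\tau$-localization. If you first integrate in $\tau$ (gaining $\min(L_0,L_2)$ per $n$), the only constraint that survives on $n$ is the combined one $|\tau_1+|n|^2+|n_1-n|^2|\les L_0+L_2$, i.e. $n\cdot n_1-|n|^2$ lies in an interval of length $\les L_0+L_2$; geometrically this puts $n$ in a thin neighbourhood of an arc of a circle of radius $\sim N_2$ centred at $n_1/2$, of arclength $\sim N_0\ll N_2^{1/2}$, together with $|n|\sim N_0$ and the angular condition. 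Lemma \ref{LEM:count} does not apply to this configuration: after recentring, the admissible points occupy an angular sector of aperture only $\sim N_0/N_2\ll N_2^{-1/2}$ about $-n_1/2$, so the hypothesis $\big((\mu+\min\{\nu,1\})/N\big)^{1/2}\ll\al$ fails, and no origin-centred annulus of width $L_0/N_0$ is available any more. If instead you fix $\tau$ to keep that annulus, you must pay the measure of the set of admissible $\tau$, which is only $\les L_0+N_0^2$, and the resulting bound $\big((L_0+N_0^2)\min(N_0,L_0/N_0+1)\big)^{1/2}$ is far too lossy: take $L_0\sim L_1\sim L_2\sim 1$ and $N_0\sim N_2^{1/4}$, where the target is $O(1)$ but this gives $\sim N_0$. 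Your escape hatch, declaring $L_{\textup{max}}\les N_2^{C\eps}$ a ``trivial range,'' is unjustified --- that range contains the core case of unit modulations, and nothing in the paper (or in your argument) disposes of it for free. Relatedly, the strengthened target $\min(L_0,L_2)^{3/4}$ you reduce to is stronger than what is proved in the paper (which yields $\min(L_1,L_2)^{1/2}L_{\textup{max}}^{1/4}$, enough for \eqref{low}) and is nowhere established by your argument; also note that the reduction itself needs no comparability of $L_{\textup{med}}$ and $L_{\textup{max}}$, since $\min(L_0,L_2)^{3/4}\le L_{\textup{med}}^{3/8}L_{\textup{max}}^{3/8}$ always.

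For comparison, the paper's proof avoids all of this by counting the \emph{high} frequency $n_1$ instead. Using $|n|^2\ll N_2$ and $L_{\textup{max}}\ll N_2$ it first shows $\big||n_1|-|n_2|\big|\ll1$, which justifies decomposing $n_1,n_2$ into annuli of width $1$ and into balls $J_1,J_2$ of radius $\sim N_0$; after Cauchy--Schwarz the relevant set is then a width-one annulus at radius $\sim N_1\sim N_2$ intersected with a slab in the direction of $n$ of width $\sim\min(N_0,L_{\textup{max}}/N_0)$ (the slab width coming from the modulations \emph{and} the ball localization), inside a sector of fixed aperture since $|\cos\angle(n_1,n)|\ll1$. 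This is exactly the regime where Lemma \ref{LEM:count} applies, with $\mu=1$, $\nu\sim\min(N_0,L_{\textup{max}}/N_0)$, $\al=\pi/4$, giving a count $\les\min(N_0,L_{\textup{max}}/N_0)\le L_{\textup{max}}^{1/2}$, and the dyadic $N_0$-sum then converges for the reason you anticipate. If you want to salvage a low-frequency count, you would need a lattice-point estimate for thin neighbourhoods of short arcs of circles of radius $\sim N_2$, which is not what Lemma \ref{LEM:count} provides; as written, your proof of the key count does not go through.
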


We first assume Lemma \ref{LEM:low} and prove \eqref{bilin3}. 
Recall that  $|n|^2 \ll |n_2|\sim |n_1|$ in this case. 
Then,  by Lemma \ref{LEM:low} (with $\ta$ as above) and the Cauchy-Schwarz inequality in $N_1 \sim N_2$, we have
\begin{align*}
& \text{LHS of } \eqref{bilin3}\\ 
&\hphantom{X} \les \sum_{\substack{N_1 \sim N_2\ge 1\\\text{dyadic}}} 
\sum_{\substack{ L_1, L_2, L_0\ge 1\\\text{dyadic}}} L_1^{-\frac 12 - \dl_2} L_2^{-\frac 12 - \dl_2} L_0^{-\frac 12 + \dl_1} 
\bigg| \sum_{\substack{n, n_1\in \Z^2\\ n = n_1 - n_2}} \iintt_{\tau = \tau_1 - \tau_2}
 \ind_{|\cos \angle (n, n_2)| \ll \ta} \\
&\hphantom{XXX}
\times \ind_{|n|^2 \ll N_2} \ft{P_{N_1, L_1} u} (\tau_1, n_1) \cj{\ft{P_{N_2, L_2} v}} (\tau_2, n_2) \cj{\ft{Q_{L_0} w}} (\tau, n) d\tau_1 d\tau \bigg| \\
&\hphantom{X}
\les  \sum_{\substack{N_1 \sim N_2\ge 1\\\text{dyadic}}} 
\sum_{\substack{ L_1, L_, L_0\ge 1\\\text{dyadic}}}
 L_1^{0-} L_2^{0-} L_0^{0-} \big\| \ft{P_{N_1, L_1} u} \big\|_{L_{\tau_1}^2 \ell_{n_1}^2} \big\| \ft{P_{N_2, L_2} v} \big\|_{L_{\tau_2}^2 \ell_{n_2}^2} \big\| \ft{Q_{L_0} w} \big\|_{L_\tau^2 \ell_n^2} \\
&\hphantom{X}
\les \| \ft u \|_{L_{\tau_1}^2 \ell_{n_1}^2} \| \ft v \|_{L_{\tau_2}^2 \ell_{n_2}^2} \| \ft w \|_{L_\tau^2 \ell_n^2},
\end{align*}

\noi
as desired.

We now present the proof of Lemma \ref{LEM:low}, 
where we use the idea  from the proofs of  of Propositions 3.6 and 3.9 in \cite{Kish13}.

\begin{proof}[Proof of Lemma \ref{LEM:low}]

We first consider the case when $|n|\les L_{\text{max}}$.
 In this case, we decompose the spatial frequencies of $h$ on the left-hand side  of \eqref{low} into dyadic blocks $\{|n| \sim N_0\}$, where the dyadic number  $N_0 \ge 1$ satisfies  $N_0^2 \ll N_2$.
  For  fixed $N_0$, we decompose the spatial frequencies of $f$ and $g$ into  balls of radius $\sim N_0$,
  indexed by $J_1 \in \mathcal{J}_1$ and $J_2 \in \mathcal{J}_2$, respectively.
  With a slight abuse of notation, 
  we also use $J_1$ and $J_2$ to denote the balls themselves.
  Note that for each fixed $J_1 \in \mathcal{J}_1$, the product $\ind_{J_1} (\tau_1, n_1)\cdot  \ind_{J_2} (\tau_1 - \tau, n_1 - n)$ is nonzero for at most $O(1)$ many $J_2 \in \mathcal{J}_2$, and
  we denote the set of these indices $J_2$'s as $\mathcal{J}_2 (J_1)$.

Noting that 
\[
\big| |n_1|^2 - |n_2|^2 \big| = \big| |n|^2 -  (\tau + |n|^2) + (\tau_1 + |n_1|^2) - (\tau_2 + |n_2|^2) \big| \les N_0^2 + L_{\textup{max}},
\]

\noi
we have 
\begin{align}
\big| |n_1| - |n_2| \big| \les N_2^{-1}  (N_0^2 + L_{\textup{max}})  \ll 1.
\label{C1}
\end{align}

 \noi
 Given $j \in \N$, let $A_j = \{ ( \tau_1, n_1) : j < |n_1| \leq j+1 \}$
 and  $B_j = \{ ( \tau_2, n_2) : j  < |n_2| \leq j + 1 \}$.
 In view of \eqref{C1}, we see that  
  the product $\ind_{A_{j_1}} (\tau_1, n_1) \cdot \ind_{B_{j_2}} (\tau_2, n_2)$
   is nonzero if and only if $|j_1 - j_2| \le 1$.
Hence, by letting $f_{j_1,  J_1}(\tau_1, n_1) = \ind_{A_{j_1} \cap J_1} (\tau_1, n_1)\cdot f(\tau_1, n_1)$
and $g_{j_2, J_2}(\tau_2, n_2) = 
 \ind_{B_{j_2} \cap J_2} (\tau_2, n_2)\cdot g(\tau_2, n_2)$, 
 we can estimate  
 the left-hand side of \eqref{low}  by 
\begin{align*}
\sum_{\substack{N_0\ge 1\\ \text{dyadic}}} \sum_{\substack{J_1 \in \mathcal{J}_1 \\ J_2 \in \mathcal{J}_2 (J_1)}} \sum_{\substack{j_1, j_2\in \N\\ |j_1 - j_2| \le 1}} 
\bigg| 
\sum_{\substack{n, n_1 \in \Z^2\\ n = n_1 - n_2\\|n| \sim N_0}} &  \iintt_{\tau = \tau_1 - \tau_2}  f_{j_1, J_1} (\tau_1, n_1)
g_{j_2, J_2} (\tau_2, n_2) 
\\
&\times 
h(\tau, n) \cdot \ind_{S_{\tau, n, j_1, j_2, J_1, J_2}}(\tau_1, n_1) d\tau d\tau_1 \bigg|,
\end{align*}

\noi
where the set $S_{\tau, n, j_1, j_2, J_1, J_2}$ is defined by
\begin{align}
\begin{split}
S_{\tau, n, j_1, j_2, J_1, J_2} := \big\{ &(\tau_1, n_1) \in \R \times \Z^2: 
(\tau_1, n_1) \in   \Pf_{N_1} \cap \Sf_{L_1} \cap A_{j_1},  \, n_1 \in J_1 \\
&(\tau_1 - \tau, n_1 - n) \in  \Pf_{N_2} \cap \Sf_{L_2}\cap B_{j_2}, \, 
n_1 - n \in J_2, \\
&  |\cos \angle (n, n_1 - n)| \ll \ta \big\}.
\end{split}
\label{C1a}
\end{align}

By the Cauchy-Schwarz inequality in $\tau_1$ and $n_1$, H\"older's inequality in $\tau$ and $n$, and H\"older's inequality in $j$, we then have
\begin{align*}
&\text{LHS of }\eqref{low} \\
&\leq \sum_{\substack{N_0\ge 1\\ \text{dyadic}}} \sum_{\substack{J_1 \in \mathcal{J}_1\\ J_2 \in \mathcal{J}_2 (J_1)}} \sum_{\substack{j_1, j_2\in \N\\ |j_1 - j_2| \le 1}} 
 \sum_{n: |n| \sim N_0} \int 
\| \ind_{S_{\tau, n, j_1, j_2, J_1, J_2}}\|_{L^2_{\tau_1} \l^2_{n_1}}\\ 
&\qquad \times 
\| f_{j_1, J_1} (\tau_1, n_1) g_{j_2, J_2} (\tau_1 - \tau, n_1 - n)\|_{L^2_{\tau_1} \l^2_{n_1}}
 |h(\tau, n)| d\tau \\
&\leq \sum_{\substack{N_0\ge 1\\ \text{dyadic}}} \sum_{\substack{J_1 \in \mathcal{J}_1 \\ J_2 \in \mathcal{J}_2 (J_1)}} \sum_{\substack{j_1, j_2\in \N\\ |j_1 - j_2| \le 1}} 
 \underline{L}_{12}^{\frac 12}  \| f _{j_1, J_1}  \|_{L_{\tau_1}^2 \l_{n_1}^2}
  \| g_{j_2, J_2} \|_{L_{\tau_2}^2 \l_{n_2}^2} \| h \|_{L_\tau^2 \l_n^2} \\
&\qquad \times \sup_{\substack{\tau, \tau_1, n\\ |n| \sim N_0}} \big|\wt S_{\tau, \tau_1, n, j_1, j_2, J_1, J_2}\big|^{\frac 12} \\
&\les 
\sum_{\substack{N_0\ge 1\\ \text{dyadic}}} \sum_{\substack{J_1 \in \mathcal{J}_1 \\ J_2 \in \mathcal{J}_2 (J_1)}} \underline{L}_{12}^{\frac 12} \| \ind_{J_1}\cdot f  \|_{L_{\tau_1}^2 \l_{n_1}^2} \|  \ind_{J_2} \cdot g\|_{L_{\tau_2}^2 \l_{n_2}^2} \| h \|_{L_\tau^2 \l_n^2} \\
&\qquad \times \sup_{\substack{\tau, \tau_1, n, j_1, j_2\\ |n| \sim N_0}}  \big|\wt S_{\tau, \tau_1, n, j_1, j_2, J_1, J_2}\big|^{\frac 12} , 
\end{align*}

\noi
where the set $\wt S_{\tau, \tau_1, n, j_1, j_2,  J_1, J_2}$ is defined by
\begin{align}
\wt S_{\tau, \tau_1, n, j_1, j_2,  J_1, J_2} := &\big\{ n_1 \in \Z^2: (\tau_1, n_1) \in 
S_{\tau, n, j_1, j_2, J_1, J_2} \big\}.
\label{C1b}
\end{align}

We claim that the following bound holds:
\begin{align}
\sup_{\substack{J_1 \in \mathcal{J}_1\\J_2 \in \mathcal{J}_2(J_1)}}
 \sup_{\tau, \tau_1, n, j: |n| \sim N_0} \big| \wt  S_{\tau, \tau_1, n, j_1, j_2, J_1, J_2}\big| \les \min\bigg( N_0,  \frac{L_{\textup{max}}}{N_0} \bigg). 
 \label{C2}
\end{align}

\noi
For now, let us assume \eqref{C2}.
Note that the right-hand side of \eqref{C2} is bounded by $L_{\max}^\frac{1}{2}$.
Then, by applying  the Cauchy-Schwarz inequality in $J_1 \in \mathcal{J}_1$ and
summing over dyadic $N_0 \geq 1$ together with \eqref{C2}, 
we  obtain the desired bound \eqref{low}.

It remains to prove \eqref{C2}.
For $n_1 \in \wt  S_{\tau, \tau_1, n, j_1, j_2, J_1, J_2}$, 
  it follows from \eqref{C1b} with \eqref{C1a} that 
\[ \tau_1 \in \big( - |n_1|^2 + [-C L_1, C L_1] \big) \cap \big( \tau - |n_1 - n|^2 + [-C L_2, C L_2] \big), \]
for some constant $C > 0$. Since this intersection of  the two sets is nonempty,
we must have $\big| -|n_1|^2 - (\tau - |n_1 - n|^2) \big| = O(\overline{L}_{12})$, 
which in turn implies 
\begin{align}
 n_1 \cdot \frac{n}{|n|} = -\frac{\tau}{2|n|} + \frac{|n|}{2} + O\bigg( \frac{\overline{L}_{12}}{N_0} \bigg). 
\label{C3}
 \end{align}

\noi
On the other hand,   
for $n_1 \in \wt  S_{\tau, \tau_1, n, j_1, j_2, J_1, J_2}$, 
we also have
$n_1 \in J_1$ where $J_1$ is a ball of radius $\sim N_0$.
Hence, together with \eqref{C3}, we see that  the component of $n_1$ which is parallel to $n$ is restricted to an interval of length $\sim \min(N_0, L_{\text{max}} / N_0)$. Furthermore, we have
\begin{align*}
\begin{split}
 |\cos \angle (n_1, n)| & = \frac{|n_1 \cdot n|}{|n_1| | n|} 
 \leq \frac{|n|}{|n_1|}  + \frac{|(n_1 - n)\cdot n|}{|n_1||n|}\\
 &  \leq \frac{|n|}{|n_1|} + |\cos \angle (n_1 - n, n)| \frac{|n_1 - n|}{|n_1|} \ll \frac{|n|}{|n_1|} + \frac{|n_1 - n|^\eps}{|n|^{1-\eps} |n_1|} \ll 1, 
 \end{split}
\end{align*}

\noi
since $|n_1|\sim |n_1 - n|\gg |n|^2$.
Hence, by Lemma \ref{LEM:count} with $\mu  =  1$, 
 $\nu \sim  \min(N_0, L_{\textup{max}} / N_0)$, and $\al = \frac \pi 4$, we obtain
 \eqref{C2}.

The case  $L_{\text{max}} \ll |n|$ is similar and much simpler. We apply the same steps as in the previous case ($L_{\max}\ges |n|$), except that we do not need to decompose the spatial frequencies of $n$ into dyadic piece or  to localize $n_1$ and $n_2$ on balls of smaller radii.
In this case,  we  apply Lemma \ref{LEM:count} with $\nu \sim 1$ to obtain the desired bound.
\end{proof}

\medskip \noi
$\bullet$ \textbf{Case 3:} $|n_2|^2 \ll |n|$.

This case is similar to Case 2, so we will be brief here. For the low-modulation case (i.e.~$L_{\text{max}} \ll N_0$), the same argument works by switching the roles of $(\tau, n)$ and $(\tau_2, n_2)$. 

For the high-modulation case (i.e.~$L_{\text{max}} \ges N_0$), we need to ensure that on the 
right-hand side of \eqref{high}  in Lemma \ref{LEM:high}, the power of $L_0$ is less than $\frac 12$ (so that simply switching the roles of $(\tau, n)$ and $(\tau_2, n_2)$ does not work). With $f$, $g$, and $h$ as in the statement of Lemma~\ref{LEM:high}, when $N_2^2 \ll N_0 \sim N_1$ and $L_{\text{max}} \ges N_0$, we use the Cauchy-Schwarz inequality in $\tau$ and $n$ and apply Lemma \ref{LEM:bilin} to obtain
\begin{align*}
\text{LHS of } \eqref{high} &\les \| h \|_{L_\tau^2 \ell_n^2}
 \underline{L}_{12}^{\frac 12} \bigg( \frac{\overline{L}_{12}}{N_0} + 1 \bigg)^{\frac 12 } N_2^{\frac 12} \| f \|_{L_{\tau_1}^2 \ell_{n_1}^2} \| g \|_{L_{\tau_2}^2 \ell_{n_2}^2} \\
&\les \big( L_1^{\frac 12} L_2^{\frac 12} + \underline{L}_{12}^{\frac 12} N_0^{\frac 14} \big) \| f \|_{L_{\tau_1}^2 \ell_{n_1}^2} \| g \|_{L_{\tau_2}^2 \ell_{n_2}^2} \| h \|_{L_\tau^2 \ell_n^2} \\
&\les \big( L_1^{\frac 12} L_2^{\frac 12} L_{\text{max}}^{0+} + \underline{L}_{12}^{\frac 12} L_{\text{max}}^{\frac 14 +} \big) N_0^{0-} \| f \|_{L_{\tau_1}^2 \ell_{n_1}^2} \| g \|_{L_{\tau_2}^2 \ell_{n_2}^2} \| h \|_{L_\tau^2 \ell_n^2},
\end{align*}

\noi
which suffices for our purpose.

\begin{remark}\label{REM:bilin}
\rm
The bilinear estimate \eqref{bilin1} in Proposition \ref{PROP:bilin}
also holds if we replace $u \cj{v}$ by $uv$.
In fact, 
a slight modification of the argument allows us to show 
\begin{equation}
\| u v \|_{X_T^{0,-\frac 12 + \dl_1}} \les \| u \|_{X_T^{0, \frac 12 + \dl_2}} \| v \|_{X_T^{0, \frac 12 + \dl_2}},
\label{bilin_uv}
\end{equation}

\noi
where $0 < T \leq 1$ and $\dl_1 >  \dl_2 > 0$ are sufficiently small. 
Note that, in this case, by denoting $n_1$, $n_2$, and $n$ as the frequency of $u$, $v$, and 
the duality term $w$, respectively.  we have $n = n_1 + n_2$ and $\tau = \tau_1 + \tau_2$ so that 
\[ (\tau + |n|^2) - (\tau_1 + |n_1|^2) - (\tau_2 + |n_2|^2) = 2n_1 \cdot n_2. \]

\noi
Hence, we need to perform case-by-case analysis, depending on the interaction between $n_1$ and $n_2$.

The proof of \eqref{bilin_uv} follows essentially as in the proof of \eqref{bilin1} except for Lemma \ref{LEM:high}. With $f$, $g$, and $h$ as in the statement of Lemma \ref{LEM:high}, when $N_1^2 \ll N_2 \sim N_0$ and $L_{\text{max}} \ges N_2$,  the Cauchy-Schwarz inequality in $\tau_2$ and $n_2$ and
 Lemma \ref{LEM:bilin} yield 
\begin{equation}
\begin{split}
\bigg| \sum_{\substack{n, n_1\in \Z^2\\ n = n_1 + n_2}} &
\iintt_{\tau = \tau_1 + \tau_2} f(\tau_1, n_1) g(\tau_2, n_2) h(\tau, n) d\tau d\tau_1 \bigg| \\
&\les \big( L_0^{\frac 12} L_1^{\frac 12} N_2^{-\frac 14} + \underline{L}_{01}^{\frac 12} N_2^{\frac 14} \big) \| f \|_{L_{\tau_1}^2 \ell_{n_1}^2} \| g \|_{L_{\tau_2}^2 \ell_{n_2}^2} \| h \|_{L_\tau^2 \ell_n^2}.
\end{split}
\label{uv1}
\end{equation}

\noi
In order to repeat the argument in Subcase 2.a in the proof of Proposition \ref{PROP:bilin}, 
we need the power of $L_0$ to be less than $\frac 12$, especially  when $L_{\text{max}} = L_0$. 
Namely, the bound \eqref{uv1} can not be used directly as it is.

By H\"older's inequality in $\tau$ and $n$, Young's inequality, and H\"older's inequality
with \eqref{high1}, we obtain
\begin{equation}
\begin{split}
\bigg| \sum_{\substack{n, n_1\in \Z^2\\ n = n_1 + n_2}} &
\iintt_{\tau = \tau_1 + \tau_2} f(\tau_1, n_1) g(\tau_2, n_2) h(\tau, n) d\tau d\tau_1 \bigg| \\
&\les \| f \|_{\ell_{n_1}^2 L_{\tau_1}^{\frac 8 7}} \| g \|_{\ell_{n_2}^1 L_{\tau_2}^2} \| h \|_{\ell_n^2 L_{\tau}^{\frac 85}} \\
&\les L_0^{\frac 18} L_1^{\frac 38} N_2 \| f \|_{L_{\tau_1}^2 \ell_{n_1}^2} \| g \|_{L_{\tau_2}^2 \ell_{n_2}^2} \| h \|_{L_\tau^2 \ell_n^2}.
\end{split}
\label{uv2}
\end{equation}

\noi
Then, by interpolating \eqref{uv1} and \eqref{uv2}, we  obtain the desired inequality. A similar inequality also holds  when $N_2^2 \ll N_1 \sim N_0$ and $L_{\text{max}} \ges N_1$ by switching the roles of $(\tau_1, n_1)$ and $(\tau_2, n_2)$.

%
\end{remark}

\section{Local well-posedness of the quadratic NLS}
\label{SEC:LWP}

In this section, we present the proof of Theorem \ref{THM:LWP}, i.e. local well-posedness of the quadratic NLS \eqref{qNLS} in $L^2(\T^2)$.
By writing \eqref{qNLS} in the following Duhamel formulation, we have 
\begin{equation}
u(t) = \G(u) := e^{it\Dl} u_0 - i \int_0^t e^{i(t-t')\Dl} |u|^2(t') dt'.
\label{Duh}
\end{equation}

\noi
Let $\eps > 0$ be sufficiently small. Then, by \eqref{Duh}, Lemma \ref{LEM:Xlin}, and Proposition \ref{PROP:bilin}, we have
\begin{align*}
\| \G(u) \|_{X_T^{0, \frac 12 + \eps}} &\leq \| e^{it\Dl} u_0 \|_{X_T^{0, \frac 12 + \eps}} + \bigg\| \int_0^t e^{i(t-t')\Dl} u(t') \cj{u}(t') dt' \bigg\|_{X_T^{0, \frac 12 + \eps}} \\
&\les \| u_0 \|_{L^2} + T^{\eps }\| u \cj{u} \|_{X^{0, -\frac 12 + 2 \eps}} \\
&\les \| u_0 \|_{L^2} + T^{\eps } \| u \|_{X_T^{0, \frac 12 + \eps}}^2.
\end{align*}

\noi
 Similarly, we  obtain the following difference estimate:
\begin{align*}
\| \G(u) - \G(v) \|_{X_T^{0, \frac 12 + \eps}}
\les T^{ \eps}\Big( \| u \|_{X_T^{0, \frac 12 + \eps}} + \| v \|_{X_T^{0, \frac 12 + \eps}} \Big)
 \| u-v \|_{X_T^{0, \frac 12 + \eps}} .
\end{align*}

\noi
Therefore,  by choosing $T = T(\| u_0 \|_{L^2}) > 0$
sufficiently small, we conclude  that $\G$ is a contraction on the ball $B_R \subset X^{0, \frac 12 + \eps}_T$ of radius $R \sim \| u_0 \|_{L^2}$.
This proves  Theorem~\ref{THM:LWP}.

\begin{ackno}\rm
R.L.~and T.O.~were supported by the European Research Council (grant no.~864138 ``SingStochDispDyn'').
 They are also grateful to the anonymous referee for the helpful comments.

\end{ackno}

\end{document}